\numberwithin{equation}{section}
\theoremstyle{definition}
\numberwithin{equation}{section}
\newtheorem{theorem}{\bf Theorem}[section]
\newtheorem{remark}{\bf Remark}[section]
\newtheorem{proposition}{Proposition}[section]
\newtheorem{definition}{Definition}[section]
\newtheoremstyle
{remarkstyle}
{}
{11pt}
{}
{}
{\bfseries}
{:}
{     }
{\thmname{#1} \thmnumber{#2} }
\theoremstyle{remarkstyle}
\begin{document}
	\title{Skellam Random Fields and Their Fractional Variants}
	 \author[Pradeep Vishwakarma]{Pradeep Vishwakarma}
	 \address{Pradeep Vishwakarma, Theoretical Statistics and Mathematics Unit,
	 	Indian Statistical Institute, Kolkata, 700108, West Bengal, India.}
	 \email{vishwakarmapr.rs@gmail.com}	
	\subjclass[2010]{Primary : 60G55; Secondary: 60G60}	
	\keywords{Skellam random field, fractional Poisson random field, rectangular increments, fractional Skellam random field}
	\date{\today}	
	\maketitle
	\begin{abstract}
		We study some Skellam-type spatial point processes. As a particular case, we consider a Skellam random field (SRF) on the positive quadrant of the plane, which is a two parameter L\'evy process with rectangular increments. A weak convergence result is obtained for the SRF. The Riemann-Liouville integral of the SRF over finite rectangles is analyzed. We derive a scaled compound Poisson field characterization for the Riemann integral of the SRF. Also, an explicit expression of its characteristic function is obtained.  Later, we consider three fractional variants of the two parameter SRF. Their point probabilities, associated governing equations, and various other distributional properties are analyzed. 
	\end{abstract}
	
\section{Introduction}\label{sec1}
 The Skellam-type distribution was introduced and studied in \cite{Irwin1937, Skellam1946}. The standard Skellam process is an integer-valued point process, defined as a difference of two independent Poisson processes. Let $\{N_1(t),\ t\ge0\}$ and $\{N_2(t),\ t\ge0\}$ be independent Poisson processes with positive transition rates $\lambda_1$ and $\lambda_2$, respectively. The one parameter Skellam process $\{S(t),\ t\ge0\}$ is defined as follows (see \cite{Barndorff-Nielsen2011}):
 \begin{equation}\label{spdef}
     S(t)\coloneqq N_1(t)-N_2(t),\ t\ge0.
 \end{equation}
 Its probability generating function is given by
 \begin{equation}\label{sppgf}
 	G(u,t)=\exp\bigg(\lambda_1t(u-1)+\lambda_2t\bigg(\frac{1}{u}-1\bigg)\bigg),\ 0<|u|\leq1.
 \end{equation} The Skellam process has been widely studied for both its theoretical properties and practical applications. From insurance to image processing, it finds applications in a variety of fields. For example, Barndorff-Nielsen et al. \cite{Barndorff-Nielsen2011} introduced a scaled and generalized version of the Skellam process using negative binomial distributions for financial modeling. In \cite{Kerss2014}, a time-changed Skellam process is analyzed, and its potential application in finance is discussed. For a more detailed study on the difference between two Poisson processes, we refer the reader to \cite{Barndorff-Nielsen2011, Carr2011}.
The fractional variants of the Skellam process are introduced in \cite{Kerss2014}. Skellam processes of order $k$ and their time-changed variants can be found in \cite{Gupta2020, Kataria2024a}, and references therein. Moreover, a Skellam process via a generalized counting process and its time-changed variants are introduced and studied in \cite{Kataria2022}. Recently, a generalisation of the Skellam point process and its fractional variants are investigated in \cite{Cinque2025}.  

 The spatial point process on finite dimensional Euclidean space describes the random distribution of points within finite rectangles. The spatial Poisson point process (Poisson random field) is one of the most famous point processes for modeling the random point patterns in spatial locations. It provides a key statistical model for analyzing point patterns in higher dimensions, where each point corresponds to the spatial position of an object or an event. Such processes find applications in various fields of engineering, physics, astronomy, ecology, telecommunication, \textit{etc}. In recent years, various fractional variants of the Poisson point process on the plane have been investigated by many authors, for example, see \cite{Aletti2018, Kataria2025, Leonenko2015, Vishwakarma2025}. These models provide advanced mathematical tools for modeling the complex spatial data exhibiting a long correlation structure.

The spatial extension of the standard Skellam process on the real line leads to an integer-valued point process. We note that such point processes have not yet been explored in the existing literature. In this paper, we investigate and characterize several classes of Skellam random fields arising within the broader family of Poisson random fields. We introduce a Skellam-type point process defined via a Poisson point process on a finite dimensional Euclidean space. Let $\mathbb{R}_+$ be the set of non-negative real numbers and $\mathcal{B}$ be the set of Borel sets of $\mathbb{R}^M_+$, $M\ge1$. Let $\mathcal{J}\subset\mathbb{R}-\{0\}$ be a finite subset. We consider a random field  defined as 
\begin{equation*}
    \mathcal{S}(B)\coloneqq\sum_{j\in\mathcal{J}}jN_j(B),\ B\in\mathcal{B},
\end{equation*}
where $\{N_j(B),\ B\in\mathcal{B}\}$, $j\in\mathcal{J}$ are independent Poisson random fields on $\mathbb{R}^M_+$ (for definition, see Section \ref{prfdef}). Note that $\{\mathcal{S}(B),\ B\in\mathcal{B}\}$ is an integer-valued random measure, and we call it the generalized Skellam random field (GSRF). The GSRF is the spatial version of a generalized Skellam point process introduced and studied in \cite{Cinque2025}. In particular, for $M=1$, it reduces to the same. For $\mathcal{J}=\{-1,1\}$, the GSRF reduces to the spatial version of the standard Skellam process as defined in (\ref{spdef}). We denote it by $\{S(B),\ B\in\mathcal{B}\}$ and called it the Skellam random field (SRF). It is defined as
\begin{equation*}
    S(B)\coloneqq N_1(B)-N_2(B),\ B\in\mathcal{B},
\end{equation*}
where $\{N_1(B),\ B\in\mathcal{B}\}$ and $\{N_2(B),\ B\in\mathcal{B}\}$ are independent Poisson random fields with rate parameters $\lambda_1>0$ and $\lambda_2>0$, respectively.
The one dimensional distribution of SRF is given by
\begin{equation*}
    \mathrm{Pr}\{S(B)=n\}=e^{-(\lambda_1+\lambda_2)|B|}\bigg(\frac{\lambda_1}{\lambda_2}\bigg)^{n/2}I_{|n|}(2\sqrt{\lambda_1\lambda_2}|B|),\ n\in\mathbb{Z},
\end{equation*}
where $|B|$ denotes the Lebesgue measure of $B$, and $\mathbb{Z}$ is the set of integers. Here, $I_\nu$, $\nu>-1$ is the modified Bessel function of the first kind defined as
\begin{equation}\label{Besselm}
	I_\nu(x)=\sum_{k=0}^{\infty}\frac{(x/2)^{2k+\nu}}{k!\Gamma(\nu+k+1)},\ x\in\mathbb{R}.
\end{equation}

The Skellam random field provides an integer-valued point process for modeling the spatial random point patterns in which the positive value represents the addition of points or net gain, and the negative value represents the removal of points or net loss. It can be used to model real-world systems across various applied fields. For example, in image analysis, it can be used to model the appearance of points on the detector screen; in wireless networking, it is used to model the status of node locations; and in epidemiology, it is used to study the outbreak of a disease. Moreover, fractional variants of the SRF provide mathematical tools for modeling the aforementioned systems whenever they exhibit complex temporal behavior.

A brief outline of the paper is as follows:

In Section \ref{sec2}, we collect some known definitions and results about the Poisson random field and its time-fractional variant. In Section \ref{sec3}, we begin by introducing the GSRF $\{\mathcal{S}(B),\ B\in\mathcal{B}\}$. We obtain a compound Poisson random field representation for it. We then analyzed various distributional properties of the SRF $\{S(B),\ B\in\mathcal{B}\}$ for $\mathcal{J}=\{-1,1\}$. Thereafter, we consider the case $M=2$, that is, a GSRF on the positive quadrant of the plane which we denote by $\{\mathcal{S}(s,t),\ (s,t)\in\mathbb{R}^2_+\}$. It is a two parameter L\'evy process with rectangular increments. In Theorem \ref{thm31}, we derive a finite dimensional-type convergence result for it. Moreover, we consider the two-parameter SRF and derive the governing differential equation for its probability generating function. In Section \ref{sec5}, we introduce and study the distributional properties of some integrals of the GSRF over finite rectangles in $\mathbb{R}^2_+$. A scaled compound characterization for its Riemann integral is obtained. In Section \ref{sec4}, we analyze three different fractional variants of the two parameter SRF. The explicit expressions for their point probabilities and the associated governing differential equations are derived. The first two moments and the auto-covariance functions of these processes are obtained. Moreover, it is remarked that these processes are equal in distribution with a particular type of fractional compound Poisson random fields in the sense of \cite{Vishwakarma2025b}.

\section{Preliminaries}\label{sec2} Here, we recall some definitions and some known results that will be used in this paper. 

\subsection{Poisson random field}\label{prfdef}
Let $\mathcal{B}$ be the Borel sigma algebra on $\mathbb{R}^M_+$, $M\ge1$, and let $|B|$ denote the Lebesgue measure of set $B\in\mathcal{B}$. The Poisson point process or Poisson random field on $\mathbb{R}^M$ is a non-negative integer-valued random measure $\{N(B),\ B\in\mathcal{B}\}$, where $N(B)$ denotes the random number of points inside a Borel set $B$. It is also called the spatial Poisson point process, and has the following characterization (see \cite{Stoyan1995}):

\noindent (i) There exist a $\lambda>0$ such that, for $B\subset{\mathbb{R}^M}_+$ with $|B|<\infty$, the random points count $N(B)$ in set $B$ has Poisson distribution with mean $\lambda|B|$, that is,
\begin{equation}\label{dprfdist}
	\mathrm{Pr}\{N(B)=k\}=\frac{e^{-\lambda|B|}(\lambda|B|)^k}{k!},\ k\ge0,
\end{equation}
\noindent (ii) for any finite collection $\{B_1,B_2,\dots,B_m\}$ of disjoint  sets, the random variables $N(B_1)$, $N(B_2)$, $\dots,N(B_m)$ are independent  of each other.

For $B\in\mathcal{B}$ such that $o(|B|)/|B|\to0$ as $|B|\to0$, we have 
\begin{equation}\label{tpprf}
	\mathrm{Pr}\{N(B)=n\}=\begin{cases}
		\lambda|B|+o(|B|),\ n=1,\\
		1-\lambda|B|+o(|B|),\ n=0,\\
		o(|B|),\ n>1.
	\end{cases}
\end{equation}

For $M=2$, let us consider a Poisson random field (PRF) $\{N(s,t), (s,t)\in\mathbb{R}^2_+\}$ on positive quadrant of plane defined by
$
N(s,t)\coloneqq N([0,s]\times[0,t])$, $ (s,t)\in\mathbb{R}_+^2
$, $N(s,t)$ denotes the total number of random points inside the rectangle $[0,s]\times[0,t]$. For $(s,t)\preceq (s',t')$, that is, $s\leq s'$ and $t\leq t'$, the increment of PRF over rectangle $(s,s']\times (t,t']$ is defined as follows:
\begin{equation}\label{recinc}
	\Delta_{s,t}N(s',t')=N((s,s']\times (t,t'])\coloneqq N(s',t')-N(s,t')-N(s',t)+N(s,t).
\end{equation}
The PRF $\{N(s,t),\ (s,t)\in\mathbb{R}^2_+\}$ on $\mathbb{R}^2_+$ is a c\'adl\'ag process such that $N(0,t_2)=N(t_1,0)=0$ with probability one. The increment $\Delta_{s,t}N(s',t')$ is a Poisson random variable with mean $\lambda(s'-s)(t'-t)$ for some constant $\lambda>0$. Its characteristic function is given by
\begin{equation}\label{chprf}
	\mathbb{E}e^{iuN(s,t)}=\exp(\lambda st(e^{iu}-1)),\ u\in\mathbb{R}.
\end{equation}

\subsection{Inverse stable subordinator and fractional Poisson random field}\label{fprf}
Let $\{H^\alpha(t),\ t\ge0\}$, $0<\alpha<1$ be an $\alpha$-stable subordinator. The first passage time process $\{E^\alpha(t),\ t\ge0\}$ defined by 
\begin{equation*}
    E^\alpha(t)=\inf\{\tau>0:H^\alpha(\tau)>t\},\ t\ge0
\end{equation*}
is called an inverse $\alpha$-stable subordinator (see \cite{Meerschaert2013}). We assume that $E^1(t)=t$. Its Laplace transforms with respect to space and times variables are given by $\mathbb{E}e^{-uE^\alpha(t)}=E_{\alpha,1}(-ut^\alpha)$, $u>0$ and
\begin{equation}\label{islap}
	\int_{0}^{\infty}e^{-wt}\mathrm{Pr}\{E^\alpha(t)\in\mathrm{d}x\}\,\mathrm{d}t=w^{\alpha-1}e^{-tw^\alpha},\ w>0,
\end{equation}
respectively. Its mean and variance are
\begin{equation}\label{ismeam}
	\mathbb{E}E^\alpha(t)=\frac{t^\alpha}{\Gamma(\alpha+1)},\ t>0
\end{equation}
and 
\begin{equation*}
	\mathbb{V}\mathrm{ar}E^\alpha(t)=t^{2\alpha}\bigg(\frac{2}{\Gamma(2\alpha+1)}-\frac{1}{\Gamma^2(\alpha+1)}\bigg),\ t>0,
\end{equation*}
respectively.

Let $\{E_1^\alpha(t),\ t\ge0\}$ and $\{E_2^\beta(t),\ t\ge0\}$ be two independent inverse stable subordinators that are independent of the PRF $\{N(s,t),\ (s,t)\in\mathbb{R}^2_+\}$, defined in Section \ref{sec1}. Leonenko and Merzbach \cite{Leonenko2015} introduced and studied a time-changed variant of PRF $\{N(E_1^\alpha(s),E_2^\beta(t)),\ (s,t)\in\mathbb{R}^2_+\}$. Its various characterizations are given in \cite{Aletti2018}. Recently, Kataria and Vishwakarma \cite{Kataria2025}, introduced and studied a fractional variants of the PRF, denoted by $\{N^{\alpha,\beta}(s,t),\ (s,t)\in\mathbb{R}^2_+\}$, $0<\alpha,\beta\leq1$, whose distribution $q^{\alpha,\beta}(n,s,t)=\mathrm{Pr}\{N^{\alpha,\beta}(s,t)=n\}$, $n\ge0$ solves the following system of  partial fractional differential equations:
\begin{equation*}
	\frac{\partial^{\alpha+\beta}}{\partial t^\beta\partial s^\alpha}q^{\alpha,\beta}(n,s,t)=\lambda(n+1)q^{\alpha,\beta}(n+1,s,t)-\lambda(2n+1)q^{\alpha,\beta}(n,s,t)+\lambda nq^{\alpha,\beta}(n-1,s,t),
\end{equation*}
with initial conditions $q^{\alpha,\beta}(0,0,t)=q^{\alpha,\beta}(0,s,0)=1$.  Here, the derivative involve is the Caputo fractional derivative defined as follows (see \cite{Kilbas2006}):
\begin{equation}\label{cder}
	\frac{\mathrm{d}^\alpha}{\mathrm{d}x^\alpha}f(x)=\begin{cases}
		\frac{1}{\Gamma(1-\alpha)}\int_{0}^{x}\frac{\frac{\mathrm{d}}{\mathrm{d}y}f(y)}{(x-y)^\alpha}\,\mathrm{d}y,\ 0<\alpha<1,\\
		\frac{\mathrm{d}}{\mathrm{d}x}f(x),\ \alpha=1.
	\end{cases}
\end{equation}
It is called the fractional Poisson random field (FPRF). Also, it is established that $N^{\alpha,\beta}(s,t)\overset{d}{=}N(E_1^\alpha(s),E_2^\beta(t))$ for $0<\alpha,\beta<1$, where $\overset{d}{=}$ denotes the equality in distribution. The distribution of FPRF is given by
\begin{equation}\label{fprfdist}
	q^{\alpha,\beta}(n,s,t)=\sum_{k=n}^{\infty}\frac{(-1)^{k-n}k_{(k-n)}k_{(n)}(\lambda s^\alpha t^\beta)^k}{\Gamma(k\alpha+1)\Gamma(k\beta+1)},\ n\ge0,
\end{equation}
where $k_{(n)}=k(k-1)\dots(k-n+1)$ with $k_{(0)}=1$.

For $(s,t)$ and $(s',t')$ in $\mathbb{R}^2_+$, the mean, variance and the auto-covariance of the FPRF are given by
\begin{equation}\label{fprfmean}
	\mathbb{E}N^{\alpha,\beta}(s,t)=\frac{\lambda s^\alpha t^\beta}{\Gamma(\alpha+1)\Gamma(\beta+1)},
\end{equation}
\begin{equation}\label{fprfvar}
	\mathbb{V}\mathrm{ar}N^{\alpha,\beta}(s,t)=\frac{\lambda s^\alpha t^\beta}{\Gamma(\alpha+1)\Gamma(\beta+1)}+\frac{(2\lambda s^\alpha t^\beta)^2}{\Gamma(2\alpha+1)\Gamma(2\beta+1)}-\frac{(\lambda s^\alpha t^\beta)^2}{\Gamma^2(\alpha+1)\Gamma^2(\beta+1)}
\end{equation}
and
\begin{align}\label{fprfcov}
	\mathbb{C}\mathrm{ov}(N^{\alpha,\beta}(s,t),N^{\alpha,\beta}(s',t'))&=\frac{\lambda(s\wedge s')^\alpha(t\wedge t')^\beta}{\Gamma(\alpha+1)\Gamma(\beta+1)}-\frac{\lambda^2(ss')^\alpha(tt')^\beta}{\Gamma^2(\alpha+1)\Gamma^2(\beta+1)}\nonumber\\
	&\ \ +\frac{\lambda}{\alpha\Gamma^2(\alpha)}\int_{0}^{s\wedge s'}((s-x)^\alpha+(s'-x)^\alpha)x^{\alpha-1}\,\mathrm{d}x\nonumber\\
	&\ \ \cdot\frac{\lambda}{\beta\Gamma^2(\beta)}\int_{0}^{t\wedge t'}((t-y)^\beta+(t'-y)^\beta)y^{\beta-1}\,\mathrm{d}y,
\end{align}
respectively. Here, $s\wedge t$ denotes the minimum of $s$ and $t$.
\section{Skellam random fields}\label{sec3}
In this section, we introduce a generalized Skellam point process on $\mathbb{R}^M_+$. Let $\mathcal{J}\subset\mathbb{R}-\{0\}$ be a finite subset. Let $\{N_j(B),\ B\in\mathcal{B}\}$, $j\in\mathcal{J}$ be independent Poisson random fields on $\mathbb{R}^M_+$ with parameters $\lambda_j>0$, $j\in\mathcal{J}$, respectively. We consider a random field $\{\mathcal{S}(B),\ B\in\mathcal{B}\}$ on $\mathbb{R}^M_+$ defined as follows:
\begin{equation}\label{gsrf}
	\mathcal{S}(B)\coloneqq\sum_{j\in \mathcal{J}}jN_j(B).
\end{equation}
We call it the generalized Skellam random field (GSRF) with parameters $\{\lambda_j\}_{j\in\mathcal{J}}$. 
For $M=1$ and considering the non-negative positive index only, it reduces to a generalized Skellam point process $\{\sum_{j\in\mathcal{J}}jN_j(t),\ t\ge0\}$, where $\{N_j(t),\ t\ge0\}$'s are independent Poisson processes. It was introduced and studied in \cite{Cinque2025}. Moreover, for $\mathcal{ J}=\{1\}$, the GSRF reduces to the PRF on $\mathbb{R}^M_+$.

For $B\in\mathcal{B}$ such that $|B|<\infty$, the mean and variance of (\ref{gsrf}) are given by $\mathbb{E}\mathcal{S}(B)=\sum_{j\in\mathcal{J}}j\lambda_j|B|$ and $\mathbb{V}\mathrm{ar}\mathcal{S}(B)=\sum_{j\in\mathcal{J}}j^2\lambda_j|B|$, respectively. For $B_1$ and $B_2$ in $\mathcal{B}$, the covariance of PRF $\{N_j(B),\ B\in\mathcal{B}\}$ is given by $\mathbb{C}\mathrm{ov}(N_j(B_1),N_j(B_2))=\lambda_j|B_1\cap B_2|$ for each $j\in\mathcal{J}$ (see \cite{Stoyan1995}, Eq. (2.23)). Thus, the auto covariance of GSRF is given by $\mathbb{C}\mathrm{ov}(\mathcal{S}(B_1),\mathcal{S}(B_2))=\sum_{j\in\mathcal{J}}j^2\lambda_j|B_1\cap B_2|$.
\begin{remark}
	Whenever $\mathcal{J}$ is a collection of finitely many integers, for any $B\in\mathcal{B}$ such that $o(|B|)/|B|\to0$ as $|B|\to0$, from (\ref{tpprf}), we have
	\begin{equation*}
		\mathrm{Pr}\{\mathcal{S}(B)=j\}=\begin{cases}
			\lambda_j|B|+o(|B|),\ j\in\mathcal{J},\\
			1-\sum_{j\in\mathcal{J}}\lambda_j|B|+o(|B|),\ j=0,\\
			o(|B|),\ \text{otherwise}.
		\end{cases}
	\end{equation*}	
\end{remark}
\begin{proposition}\label{gsrfcrep}
	Let $\{\mathcal{Y}_k\}_{k\ge1}$ be a sequence of independent and identically distributed (iid) random variables taking value in $\mathcal{ J}$ such that $\mathrm{Pr}\{\mathcal{Y}_1=j\}=\lambda_j/\sum_{j\in\mathcal{J}}\lambda_j$, $j\in\mathcal{J}$. Also, let $\{N(B),\ B\in\mathcal{B}\}$ be a PRF on $\mathbb{R}^M_+$ with parameter $\sum_{j\in\mathcal{J}}\lambda_j$ that is independent of $\mathcal{Y}_k$'s. Then, the GSRF with parameters $\{\lambda_j\}_{j\in\mathcal{J}}$ satisfy the following equality:
	\begin{equation}
		\mathcal{S}(B)\overset{d}{=}\sum_{k=1}^{N(B)}\mathcal{Y}_k,\ B\in\mathcal{B},
	\end{equation}
	where $\overset{d}{=}$ denotes the equality in distribution.
\end{proposition}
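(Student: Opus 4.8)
The plan is to prove the identity by comparing characteristic functions and exploiting the Poisson structure on both sides. Fix $B\in\mathcal{B}$ with $|B|<\infty$. On the left-hand side, since the fields $\{N_j(B)\}_{j\in\mathcal{J}}$ are independent and each $N_j(B)$ is Poisson with mean $\lambda_j|B|$, the characteristic function of $\mathcal{S}(B)=\sum_{j\in\mathcal{J}}jN_j(B)$ factorizes:
\[
\mathbb{E}e^{iu\mathcal{S}(B)}=\prod_{j\in\mathcal{J}}\mathbb{E}e^{iujN_j(B)}=\prod_{j\in\mathcal{J}}\exp\bigl(\lambda_j|B|(e^{iju}-1)\bigr)=\exp\Bigl(|B|\sum_{j\in\mathcal{J}}\lambda_j(e^{iju}-1)\Bigr),\quad u\in\mathbb{R}.
\]

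Next I would compute the characteristic function of the compound sum $T_B\coloneqq\sum_{k=1}^{N(B)}\mathcal{Y}_k$. Write $\Lambda\coloneqq\sum_{j\in\mathcal{J}}\lambda_j$ and $\phi(u)\coloneqq\mathbb{E}e^{iu\mathcal{Y}_1}=\Lambda^{-1}\sum_{j\in\mathcal{J}}\lambda_j e^{iju}$. Conditioning on $N(B)$ and using that the $\mathcal{Y}_k$ are iid and independent of $N(B)$,
\[
\mathbb{E}e^{iuT_B}=\mathbb{E}\bigl[\phi(u)^{N(B)}\bigr]=\exp\bigl(\Lambda|B|(\phi(u)-1)\bigr),
\]
the last equality being the probability generating function of the Poisson variable $N(B)$ evaluated at $\phi(u)$. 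The elementary identity $\Lambda(\phi(u)-1)=\sum_{j\in\mathcal{J}}\lambda_j(e^{iju}-1)$ then shows $\mathbb{E}e^{iuT_B}=\mathbb{E}e^{iu\mathcal{S}(B)}$ for all $u\in\mathbb{R}$, which by uniqueness of characteristic functions gives the claimed equality in distribution for each fixed $B$.

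To upgrade this to an equality of random fields, I would run the same computation jointly over an arbitrary finite collection of pairwise disjoint sets $B_1,\dots,B_m\in\mathcal{B}$ of finite measure. On the left, property (ii) of each $N_j$ makes $\mathcal{S}(B_1),\dots,\mathcal{S}(B_m)$ independent, so the joint characteristic function is the product of the one-set expressions above; on the right, interpreting $T_{(\cdot)}$ through the usual marked-Poisson construction (each point of the PRF $N$ carries an independent $\mathcal{J}$-valued mark with law $(\lambda_j/\Lambda)_{j\in\mathcal{J}}$ and $T_B$ sums the marks of the points lying in $B$), the sums over disjoint regions involve disjoint, hence independent, collections of marked points, so the joint characteristic function again factorizes and matches term by term.

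I do not anticipate a genuine obstacle: the argument is the classical superposition/thinning duality for Poisson random variables, and the only point requiring a line of care is the justification that $\mathbb{E}\bigl[\phi(u)^{N(B)}\bigr]$ equals the Poisson generating function at the complex argument $\phi(u)$, which is immediate since $|\phi(u)|\le1$ and the defining series converges absolutely there (indeed the Poisson generating function is entire). Everything else is bookkeeping with the parameters $\{\lambda_j\}_{j\in\mathcal{J}}$.
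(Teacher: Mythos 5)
Your proof is correct and follows essentially the same route as the paper: condition on $N(B)$ to express the transform of the compound sum as the Poisson generating function evaluated at the transform of $\mathcal{Y}_1$, and match it against the factorized transform of $\sum_{j\in\mathcal{J}}jN_j(B)$. The only differences are cosmetic --- you use characteristic functions where the paper uses moment generating functions, and your final paragraph on joint distributions over disjoint sets goes beyond what the stated (marginal) equality requires.
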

\begin{proof}
	The moment generating function (mgf) of $\mathcal{Y}_1$ is given by
	$
	\mathbb{E}\exp(u\mathcal{Y}_1)={\sum_{j\in\mathcal{J}}e^{uj}\lambda_j}/{\sum_{j\in\mathcal{J}}\lambda_j}$, $ u\in\mathbb{R}$.
	So,
	\begin{align}
		\mathbb{E}\exp\bigg(u\sum_{k=1}^{N(B)}\mathcal{Y}_k\bigg)=\mathbb{E}(\mathbb{E}\exp(u\mathcal{Y}_1))^{N(B)}&=\mathbb{E}\bigg(\frac{\sum_{j\in\mathcal{J}}e^{uj}\lambda_j}{\sum_{j\in\mathcal{J}}\lambda_j}\bigg)^{N(B)}\nonumber\\
		&=\exp\bigg(\sum_{j\in\mathcal{J}}\lambda_j|B|\bigg(\frac{\sum_{j\in\mathcal{J}}e^{uj}\lambda_j}{\sum_{j\in\mathcal{J}}\lambda_j}-1\bigg)\bigg).\label{gspf11}
	\end{align}
	
	The mgf of $\mathcal{S}(B)$ is given by
	\begin{align*}
		\mathbb{E}\exp(u\mathcal{S}(B))=\mathbb{E}\exp\bigg(u\sum_{j\in\mathcal{J}}jN_j(B)\bigg)&=\prod_{j\in\mathcal{J}}\mathbb{E}\exp(ujN_j(B))\\
		&=\prod_{j\in\mathcal{J}}\exp(\lambda_j|B|(e^{uj}-1))\\
		&=\exp\bigg(\sum_{j\in\mathcal{J}}\lambda_j|B|(e^{uj}-1)\bigg),
	\end{align*}
	which coincides with (\ref{gspf11}). This completes the proof.
\end{proof}
\begin{remark}
	Let $\{N(B),\ B\in\mathcal{B}\}$ and $\{N'(B),\ B\in\mathcal{B}\}$ be independent PRFs with parameters $\lambda>0$ and $\lambda'>0$, respectively. Then, $\{N(B)+N'(B),\ B\in\mathcal{B}\}$ is a PRF with parameter $\lambda'+\lambda'$. Therefore, for independent GSRFs $\{S(B),\ B\in\mathcal{B}\}$ and $\{S'(B),\ B\in\mathcal{B}\}$ with parameters $\{\lambda_j\}_{j\in\mathcal{J}}$ and $\{\lambda'_j\}_{j\in\mathcal{J}}$, respectively, the random field $\{S(B)+S'(B),\ B\in\mathcal{B}\}$ is a GSRF with parameters $\{\lambda_j+\lambda'_j\}_{j\in\mathcal{J}}$.
\end{remark}
\subsection{Skellam random field}
Now, we consider the special case of  $\mathcal{J}=\{1,-1\}$ in (\ref{gsrf}).
Let $\{N_1(B),\ B\in\mathcal{B}\}$ and $\{N_2(B),\ B\in\mathcal{B}\}$ be two independent Poisson random fields on $\mathbb{R}^M_+$ with parameter $\lambda_1>0$ and $\lambda_2>0$, respectively. We consider a random field $\{S(B),\ B\in\mathcal{B}\}$ defined as follows: 
\begin{equation}\label{SRF0}
	S(B)\coloneqq N_1(B)-N_2(B).
\end{equation}
We call it the Skellam random field (SRF). Its mean and variance are given by $\mathbb{E}S(B)=(\lambda_1-\lambda_2)|B|$ and $\mathbb{V}\mathrm{ar}S(B)=(\lambda_1+\lambda_2)|B|$, respectively. Its auto covariance is $\mathbb{C}\mathrm{ov}(S(B_1),S(B_2))=(\lambda_1+\lambda_2)|B_1\cap B_2|$. 

The probability generating function (pgf) of (\ref{SRF0}) is given by
\begin{equation}\label{pgf1}
	\mathbb{E}u^{S(B)}=\mathbb{E}u^{N_1(B)}\mathbb{E}(1/u)^{N_2(B)}=\exp\bigg(\lambda_1|B|(u-1)+\lambda_2|B|\bigg(\frac{1}{u}-1\bigg)\bigg),\ 0<u\leq1,
\end{equation}
where we have used $\mathbb{E}u^{N_i(B)}=\exp(\lambda_i|B|(u-1))$, $0<u\leq1$, $i=1,2$, the pgf of PRF. Also, its mgf is
\begin{equation}\label{srfmgf}
	\mathbb{E}e^{\xi S(B)}=\exp\big(\lambda_1|B|(e^{\xi}-1)+\lambda_2|B|(e^{-\xi}-1)\big),\ \xi\in\mathbb{R}.
\end{equation}

For $n\ge0$, its point probabilities $p(n,B)=\mathrm{Pr}\{S(B)=n\}$ are given by
\begin{align*}
	p(n,B)&=\sum_{k=0}^{\infty}\mathrm{Pr}\{N_1(B)=n+k\}\mathrm{Pr}\{N_2(B)=k\}\\
	&=\sum_{k=0}^{\infty}e^{-\lambda_1|B|}\frac{(\lambda_1|B|)^{n+k}}{(n+k)!}e^{-\lambda_2|B|}\frac{(\lambda_2|B|)^k}{k!}\\
	&=e^{-(\lambda_1+\lambda_2)|B|}\bigg(\frac{\lambda_1}{\lambda_2}\bigg)^{n/2}I_n(2\sqrt{\lambda_1\lambda_2}|B|), 
\end{align*}
where $I_n$ is the modified Bessel function defined in (\ref{Besselm}). Thus, by the symmetry, for any $n\in\mathbb{Z}$, it is given by
\begin{equation*}
	p(n,B)=e^{-(\lambda_1+\lambda_2)|B|}\bigg(\frac{\lambda_1}{\lambda_2}\bigg)^{n/2}I_{|n|}(2\sqrt{\lambda_1\lambda_2}|B|).
\end{equation*}
\begin{remark}
	For $M=1$, and considering only the non-negative real line as indexing set, the process as defined in (\ref{SRF0}) gives the standard Skellam process on the non-negative real line,  $\{S(t),\ t\ge0\}$. Its one dimensional distribution is given as follows:
	\begin{equation*}
		\mathrm{Pr}\{S(t)=n\}=e^{-(\lambda_1+\lambda_2)t}\bigg(\frac{\lambda_1}{\lambda_2}\bigg)^{n/2}I_{|n|}(2\sqrt{\lambda_1\lambda_2}t),\ n\in\mathbb{Z}.
	\end{equation*}
	Moreover, its mean, variance and auto covariance are given by $\mathbb{E}S(t)=(\lambda_1-\lambda_2)t$, $\mathbb{V}\mathrm{ar}S(t)=(\lambda_1+\lambda_2)t$ and $\mathbb{C}\mathrm{ov}(S(t),S(t'))=(\lambda_1+\lambda_2)(t\wedge t')$ for all $t>0$ and $t'>0$, respectively.
\end{remark} 
\begin{remark}\label{rem31}
	For $B\in\mathcal{B}$ such that $o(|B|)/|B|\to0$ as $|B|\to0$, we have
	\begin{equation*}
		\mathrm{Pr}\{S(B)=n\}=\sum_{k=0}^{\infty}\mathrm{Pr}\{N_1(B)=n+k\}\mathrm{Pr}\{N_2(B)=k\},\ n\ge0.
	\end{equation*}
	From (\ref{tpprf}), it follows that
	\begin{equation*}
		\mathrm{Pr}\{S(B)=n\}=\begin{cases}
			\lambda_1|B|+o(|B|),\ n=1,\\
			\lambda_2|B|+o(|B|),\ n=-1,\\
			1-(\lambda_1-\lambda_2)|B|,\ n=0,\\
			o(|B|),\ \text{otherwise}.
		\end{cases}
	\end{equation*}
\end{remark}
\begin{remark}\label{rem3.4}
	Let $\{S(B),\ B\in\mathcal{B}\}$ be a random field as defined in (\ref{SRF0}) and $\lambda_i>0$ be rate of $N_i(B)$, $i=1,2$. Also, let $Y_1,Y_2,Y_3,\dots$ be iid discrete random variables such that $
		\mathrm{Pr}\{Y_1=1\}={\lambda_1}/{(\lambda_1+\lambda_2)}$ and $			\mathrm{Pr}\{Y_1=-1\}={\lambda_2}/{(\lambda_1+\lambda_2)}$.
	Then, $
		S(B)\overset{d}{=}\sum_{j=1}^{N(B)}Y_j$,
	where $\{N(B),\ B\in\mathcal{B}\}$ is a PRF on $\mathbb{R}^M_+$ with parameter $\lambda_1+\lambda_2$ and it is assumed to be independent of $Y_j$'s. 
\end{remark}
\subsection{GSRF on $\mathbb{R}^2_+$}  For $M=2$, the GSRF defined in (\ref{gsrf}) reduces to the two parameter random field $\{\mathcal{S}(s,t),\ (s,t)\in\mathbb{R}^2_+\}$ defined as follows:

\begin{equation}\label{gsrf2}
	\mathcal{S}(s,t)\coloneqq \sum_{j\in\mathcal{J}}jN_j(s,t),
\end{equation}
where $\{N_j(s,t),\ (s,t)\in\mathbb{R}^2_+\}$, $j\in\mathcal{J}$ are independent PRFs on plane with parameters $\lambda_j>0$, $j\in\mathcal{ J}$, respectively. For $(s,t)$ and $(s',t')$ in $\mathbb{R}^2_+$ such that $(s,t)\preceq(s',t')$, the rectangular increment of GSRF defined by (\ref{gsrf2}) is given by
\begin{align*}
	\Delta_{s,t}\mathcal{S}(s',t')&=\mathcal{S}(s',t')-\mathcal{S}(s',t)-\mathcal{S}(s,t')+\mathcal{S}(s,t)\\
	&=\sum_{j\in\mathcal{J}}jN_j(s',t')-\sum_{j\in\mathcal{J}}jN_j(s',t)-\sum_{j\in\mathcal{J}}jN_j(s,t')+\sum_{j\in\mathcal{J}}jN_j(s,t)\\
	&=\sum_{j\in\mathcal{J}}j\Delta_{s,t}N_j(s',t').
\end{align*}
Thus, the GSRF $\{\mathcal{S}(s,t),\ (s,t)\in\mathbb{R}^2_+\}$ has independent and stationary rectangular increments. It follows from the independent and stationary rectangular increments of $\{N_j(s,t),\ (s,t)\in\mathbb{R}^2_+\}$, $j\in\mathcal{J}$ and their independence.

%
\begin{theorem}\label{thm31}	For $l\ge1$, $l'\ge1$ and $k\ge1$, let $p_{l,l',j}^{(k)}\in(0,1)$ for all $j\in\mathcal{J}$ such that $\sum_{j\in\mathcal{J}}p_{l,l',j}^{(k)}<1$. Also, let $\{\mathcal{X}_l^{(k)}\}_{l\ge1,k\ge0}$ be independent random variables such that
	\begin{equation}\label{xdef}
		\mathcal{X}_{l,l'}^{(k)}=\begin{cases}
			j\in\mathcal{J},\ \text{with probability $p_{l,l',j}^{(k)}$},\\
			0,\ \text{with probability $1-\sum_{j\in\mathcal{J}}p_{l,l',j}^{(k)}$}.
		\end{cases}
	\end{equation} 
	Let us consider a random field
	$\{\mathcal{Z}_k(s,t),\ (s,t)\in\mathbb{R}^2_+\}$ on plane defined as follows: 
	\begin{equation}\label{zkdef}
		\mathcal{Z}_k(s,t)=\sum_{l=1}^{[ks]}\sum_{l'=1}^{[kt]}\mathcal{X}_{l,l'}^{(k)},\ (s,t)\in\mathbb{R}^2_+.
	\end{equation}
	
	If 
	\begin{equation}\label{limas}
		\sum_{l=1}^{[ks]}\sum_{l'=1}^{[kt]}p_{l,l',j}^{(k)}\longrightarrow\lambda_jst,\ (s,t)\in\mathbb{R}^2_+\ \ \text{and}\ \ \max_{0\prec(l,l')\preceq (k,k)}p_{l,l',j}^{(k)}\longrightarrow0\ \ \text{as $k\longrightarrow\infty$},
	\end{equation}
	then for $(s_r,t_r)$, $r=1,\dots,m$ in $\mathbb{R}^2_+$ such that $(s_1,t_1)\preceq(s_2,t_2)\preceq\dots\preceq(s_m,t_m)$, $m\ge1$, we have
	\begin{equation}\label{cng}
		(\mathcal{Z}_k(s_1,t_1), \mathcal{Z}_k(s_2,t_2),\dots,\mathcal{Z}_k(s_m,t_m))\overset{d}{\longrightarrow}(\mathcal{S}(s_1,t_1), \mathcal{S}(s_2,t_2),\dots,\mathcal{S}(s_m,t_m))\ \ \text{as $k\longrightarrow\infty$},
	\end{equation}
	where $\overset{d}{\longrightarrow}$ denotes the convergence in distribution.
\end{theorem}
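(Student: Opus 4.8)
The plan is to reduce the convergence in distribution of the finite-dimensional distributions to a computation with multivariate characteristic functions, exploiting the independence of the array $\{\mathcal{X}_{l,l'}^{(k)}\}$ and the rectangular-increment structure of both $\mathcal{Z}_k$ and $\mathcal{S}$. First I would observe that, because $\mathcal{S}$ has independent rectangular increments and $\mathcal{Z}_k$ is built from independent summands over the lattice cells $(l,l')$, it suffices to prove joint convergence of the increments over a fixed collection of disjoint rectangles rather than of the values $\mathcal{Z}_k(s_r,t_r)$ themselves; a continuous linear map then transfers this back to \eqref{cng}. Concretely, for the ordered points $(s_1,t_1)\preceq\dots\preceq(s_m,t_m)$ one partitions the ambient rectangle $[0,s_m]\times[0,t_m]$ into the lattice-aligned sub-blocks determined by the grid lines $s=s_r$, $t=t_r$, writes each $\mathcal{Z}_k(s_r,t_r)$ (resp.\ $\mathcal{S}(s_r,t_r)$) as a fixed integer combination of the block sums, and reduces to showing that the vector of block sums of $\mathcal{Z}_k$ converges in law to the vector of the corresponding (independent) increments of $\mathcal{S}$.

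Next I would compute characteristic functions. Fix a block $R$ and real parameters; for a single cell $(l,l')\subset R$,
\begin{equation*}
\mathbb{E}\exp\!\big(i\theta \mathcal{X}_{l,l'}^{(k)}\big)=1+\sum_{j\in\mathcal{J}}p_{l,l',j}^{(k)}\big(e^{i\theta j}-1\big),
\end{equation*}
so $\log \mathbb{E}\exp(i\theta \mathcal{X}_{l,l'}^{(k)})=\sum_{j\in\mathcal{J}}p_{l,l',j}^{(k)}(e^{i\theta j}-1)+O\big((\max_j p_{l,l',j}^{(k)})^2\big)$ by the expansion $\log(1+z)=z+O(z^2)$, valid once $k$ is large because of the uniform smallness in \eqref{limas}. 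Summing over the cells in $R$, independence gives
\begin{equation*}
\log \mathbb{E}\exp\!\big(i\theta \mathcal{Z}_k(R)\big)=\sum_{j\in\mathcal{J}}\Big(\sum_{(l,l')\subset R}p_{l,l',j}^{(k)}\Big)\big(e^{i\theta j}-1\big)+O\!\Big(\sum_{(l,l')\subset R}\big(\max_j p_{l,l',j}^{(k)}\big)^2\Big).
\end{equation*}
The error is bounded by $\big(\max_{(l,l')}\max_j p_{l,l',j}^{(k)}\big)\cdot\sum_{(l,l')\subset R}\max_j p_{l,l',j}^{(k)}$; the second factor stays bounded by the first limit in \eqref{limas} (the block-sum of $p^{(k)}_{l,l',j}$ converges, hence is bounded), while the first factor tends to $0$ by the second condition in \eqref{limas}, so the error vanishes. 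The main term converges, again by \eqref{limas}, to $\sum_{j\in\mathcal{J}}\lambda_j|R|(e^{i\theta j}-1)$, which by \eqref{gspf11}-type reasoning is exactly $\log \mathbb{E}\exp(i\theta\, \mathcal{S}(R))$ for the corresponding increment of $\mathcal{S}$. For the joint statement over the finitely many disjoint blocks, the block sums of $\mathcal{Z}_k$ are independent (disjoint cells), the block increments of $\mathcal{S}$ are independent, and the joint log-characteristic function factorizes into the above single-block computations; hence the joint characteristic function converges and Lévy's continuity theorem gives convergence in law of the block vectors, and then of \eqref{cng}.

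The one point requiring a little care — and the main obstacle — is the bookkeeping that makes the cell index sets ``$\{(l,l'):1\le l\le[ks_r],\,1\le l'\le[kt_r]\}$'' align cleanly into disjoint blocks for all $k$ simultaneously, since $[ks_r]$ and $[kt_s]$ are only asymptotically proportional to $s_r$ and $t_s$; one handles this by noting that each block index set is a set difference/intersection of the coordinate rectangles, so $\mathcal{Z}_k(s_r,t_r)$ is literally an integer combination (with coefficients $\pm1$) of block sums with no leftover terms, and the convergence $\sum_{(l,l')\subset R_k}p^{(k)}_{l,l',j}\to\lambda_j|R|$ for each such block follows from \eqref{limas} by inclusion–exclusion on the four corner rectangles. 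The rest is the routine $\log(1+z)=z+O(z^2)$ estimate and an application of Lévy's continuity theorem, together with the continuous-mapping argument converting block increments back to the vector in \eqref{cng}.
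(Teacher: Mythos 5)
Your proposal is correct and follows essentially the same route as the paper: both reduce the finite-dimensional convergence to convergence of independent rectangular increments over disjoint blocks (the paper via the Cram\'er--Wold device and a telescoping decomposition, you via block sums plus continuous mapping, which is the same idea) and then compute the characteristic function cell by cell using the expansion $\log(1+z)=z+O(z^2)$ together with the two conditions in (\ref{limas}). Your explicit bound on the accumulated $O(z^2)$ error, namely $\bigl(\max_{l,l'}\max_j p^{(k)}_{l,l',j}\bigr)\sum_{(l,l')}\max_j p^{(k)}_{l,l',j}\to 0$, is in fact a more careful justification of the step the paper dispatches with ``$\ln(1+x)\sim x$''.
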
 
\begin{proof}
	To show the convergence (\ref{cng}), it is enough to show that for real constants $c_1,c_2,\dots c_m$, the random variable $\sum_{r=1}^{m}c_r\mathcal{Z}_k(s_r,t_r)\overset{d}{\longrightarrow}\sum_{r=1}^{m}c_r\mathcal{S}(s_r,t_r)$ as $k\longrightarrow\infty$. Let $(s_0,t_0)=(0,0)$ and $\mathcal{Z}_k(0,0)=0$. Then, $\sum_{r=1}^{m}c_r\mathcal{Z}_k(s_r,t_r)$ can be rewritten as
	\begin{align*}
		\sum_{r=1}^{m}c_r\mathcal{Z}_k(s_r,t_r)
		&=\sum_{r'=0}^{m-1}\sum_{r=1}^{m-r'}c_{r+r'}\{\mathcal{Z}_k(s_r,t_r)-\mathcal{Z}_k(s_{r-1},t_{r-1})\}\\
		&=\sum_{r=1}^{m}\sum_{r'=0}^{m-r}c_{r+r'}\{\mathcal{Z}_k(s_r,t_r)-\mathcal{Z}_k(s_{r-1},t_{r-1})\}.
	\end{align*}
	 For $(s_{r-1},t_{r-1})\preceq(s_r,t_r)$, $1\leq r\leq m$, we note that random variables $\mathcal{Z}_k(s_2,t_2)-\mathcal{Z}_k(s_1,t_1), \mathcal{Z}_k(s_3,t_3)-\mathcal{Z}_k(s_2,t_2),\dots,\mathcal{Z}_k(s_m,t_m)-\mathcal{Z}_k(s_{m-1},t_{m-1})$ are mutually independent, as they are functions of increments over disjoint rectangles. For example, from Figure \ref{fig1}, it follows that $\mathcal{Z}_k(s_2,t_2)-\mathcal{Z}_k(s_1,t_1)$ depends on the increments over rectangles $\mathrm{I}, \mathrm{II}$ and $\mathrm{III}$, and $\mathcal{Z}_k(s_3,t_3)-\mathcal{Z}_k(s_2,t_2)$ depends on the increments over rectangles $\mathrm{IV}, \mathrm{V}$ and $\mathrm{VI}$. 
\begin{figure}[ht!]
	\includegraphics[width=6cm]{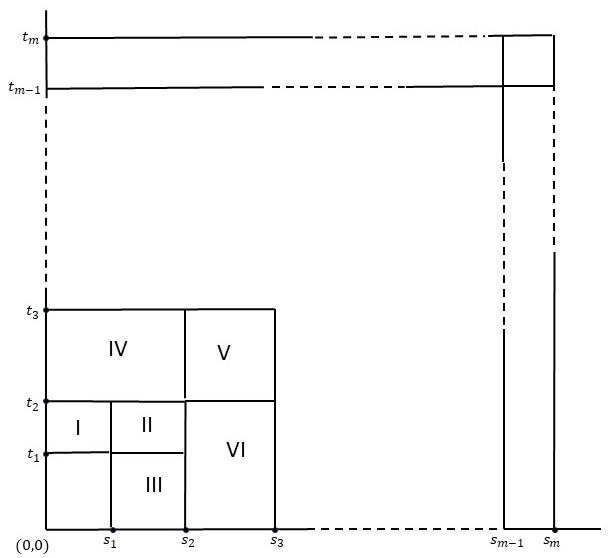}
	\caption{Increments of $\mathcal{Z}_k$}\label{fig1}
\end{figure}
Moreover, for any $(s,t)\prec(s',t')$, the rectangular increment of $\{\mathcal{Z}_k(s,t),\ (s,t)\in\mathbb{R}^2_+\}$ is given by
\begin{align}
	\Delta_{s,t}\mathcal{Z}_k(s',t')&=\mathcal{Z}_k(s',t')-\mathcal{Z}_k(s',t)-\mathcal{Z}_k(s,t')+\mathcal{Z}_k(s,t)\nonumber\\
	&=\sum_{l=1}^{[ks']}\sum_{l'=1}^{[kt']}\mathcal{X}_{l,l'}^{(k)}-\sum_{l=1}^{[ks']}\sum_{l'=1}^{[kt]}\mathcal{X}_{l,l'}^{(k)}-\sum_{l=1}^{[ks]}\sum_{l'=1}^{[kt']}\mathcal{X}_{l,l'}^{(k)}+\sum_{l=1}^{[ks]}\sum_{l'=1}^{[kt]}\mathcal{X}_{l,l'}^{(k)}\nonumber\\
	&=\sum_{l=1}^{[ks']}\sum_{l'=[kt]+1}^{[kt']}\mathcal{X}_{l,l'}^{(k)}-\sum_{l=1}^{[ks]}\sum_{l'=[kt]+1}^{[kt']}\mathcal{X}_{l,l'}^{(k)}\nonumber\\
	&=\sum_{l=[ks]+1}^{[ks']}\sum_{l'=[kt]+1}^{[kt']}\mathcal{X}_{l,l'}^{(k)}.\label{zkinc}
\end{align}
Thus, from independence of $\mathcal{X}_{l,l'}^{(k)}$'s, it follows that the random field defined by (\ref{zkdef}) has independent rectangular increments. Hence, to prove (\ref{cng}), it is enough to show that for $(s,t)\preceq(s',t')$, the following convergence holds: 
\begin{equation}\label{lime1}
	\mathcal{Z}_k(s',t')-\mathcal{Z}_k(s,t)\overset{d}{\longrightarrow} \mathcal{S}(s',t')-\mathcal{S}(s,t)\ \text{as}\ k\longrightarrow\infty.
\end{equation} 

For $(s,t)\preceq(s',t')$, we have
\begin{align*}
	\mathcal{Z}_k(s',t')-\mathcal{Z}_k(s,t)&=\mathcal{Z}_k(s',t')-\mathcal{Z}_k(s',t)-\mathcal{Z}_k(s,t')+\mathcal{Z}_k(s,t)-\mathcal{Z}_k(s,t)+\mathcal{Z}_k(s',t)+\mathcal{Z}_k(s,t')\nonumber\\
	&=\Delta_{s,t}\mathcal{Z}_k(s',t')-\Delta_{0,t}\mathcal{Z}_k(s,t')-\Delta_{s,0}\mathcal{Z}_k(s',t).
\end{align*}
Hence, in view of the independent rectangular increments property of $\{\mathcal{Z}_k(s,t),\ (s,t)\in\mathbb{R}^2_+\}$, to show (\ref{lime1}), it is sufficient to show $\Delta_{s,t}\mathcal{Z}_k(s',t')\overset{d}{\longrightarrow}\Delta_{s,t}\mathcal{S}(s',t')$ as $k\longrightarrow\infty$.

From (\ref{xdef}) and (\ref{zkinc}), the Fourier transform of $\Delta_{s,t}\mathcal{Z}_k(s',t')$ is given by
\begin{align*}
	\mathbb{E}\exp(iu\Delta_{s,t}\mathcal{Z}_k(s',t'))&=\prod_{l=[ks]+1}^{[ks']}\prod_{l'=[kt]+1}^{[kt']}\mathbb{E}e^{iu\mathcal{X}_{l,l'}^{(k)}},\ u\in\mathbb{R},\nonumber\\
	&=\prod_{l=[ks]+1}^{[ks']}\prod_{l'=[kt]+1}^{[kt']}\bigg(\sum_{j\in\mathcal{J}}e^{iuj}p_{l.l',j}^{(k)}+1-\sum_{j\in \mathcal{J}}p_{l,l',j}^{(k)}\bigg)\nonumber\\
	&=\exp\bigg(\sum_{l=[ks]+1}^{[ks']}\sum_{l'=[kt]+1}^{[kt']}\ln\bigg(\sum_{j\in\mathcal{J}}(e^{iuj}-1)p_{l,l',j}^{(k)}+1\bigg)\bigg)\nonumber\\
	&\sim \exp\bigg(\sum_{l=[ks]+1}^{[ks']}\sum_{l'=[kt]+1}^{[kt']}\sum_{j\in\mathcal{J}}(e^{iuj}-1)p_{l,l',j}^{(k)}\bigg),
\end{align*}
where the last step follows by using the assumption $p_{l,l',j}^{(k)}\longrightarrow0$ as $k\longrightarrow0$, and the approximation $\ln(1+x)\sim x$ as $x\to 0$. Thus, by using (\ref{limas}), we get
\begin{equation}\label{limchf}
	\lim_{k\longrightarrow\infty}\mathbb{E}\exp(iu\Delta_{s,t}\mathcal{Z}_k(s',t'))=\exp\bigg((s'-s)(t'-t)\sum_{j\in \mathcal{J}}(e^{iuj}-1)\lambda_j\bigg),\ u\in\mathbb{R}.
\end{equation}

Now, by using the stationary rectangular increments property of GSRF, the Fourier transform of $\Delta_{s,t}\mathcal{S}(s',t')$ is given by
\begin{align*}
	\mathbb{E}\exp(iu\Delta_{s,t}\mathcal{S}(s',t'))&=\mathbb{E}\exp(iu\mathcal{S}(s'-s,t'-t))\\
	&=\mathbb{E}\exp\bigg(iu\sum_{j\in \mathcal{J}}jN_j(s'-s,t'-t)\bigg)\\
	&=\prod_{j\in\mathcal{ J}}\mathbb{E}\exp(iujN_j(s'-s,t'-t))\\
	&=\prod_{j\in\mathcal{ J}}\exp((s'-s)(t'-t)\lambda_j(e^{iuj}-1)),\ u\in\mathbb{R},
\end{align*}
which coincides with (\ref{limchf}). This completes the proof.
\end{proof}
\begin{remark}
	 Note that for any $(s,t)$ and $(s',t')$ in $\mathbb{R}^2_+$, we can have following four possible cases: $s\leq s'$, $t\leq t'$ or $s\leq s'$, $t\ge t'$ or $s\ge s'$, $t\leq t'$ or $s\ge s'$, $t\ge t'$. So, it can be shown that the convergence (\ref{lime1}) holds for any $(s,t)$ and $(s',t')$ in $\mathbb{R}^2_+$.
	
	It is sufficient to consider the first two cases only. Case $s\leq s'$, $t\leq t'$ follows from the proof of Theorem \ref{thm31}. If $s\leq s'$, $t\ge t'$ then $\mathcal{Z}_k(s',t')-\mathcal{Z}_k(s,t)=\Delta_{s,0}\mathcal{Z}_k(s',t')-\Delta_{0,t'}\mathcal{Z}_k(s,t)$, which proves the claim. Therefore, under the assumptions of Theorem \ref{thm31}, the convergence (\ref{cng}) holds for all points $(s_1,t_1),\dots,(s_m,t_m)$ in $\mathbb{R}^2_+$ such that $\mathcal{Z}_k(s_2,t_2)-\mathcal{Z}_k(s_1,t_1),\dots,\mathcal{Z}_k(s_m,t_m)-\mathcal{Z}_k(s_{m-1},t_{m-1})$ are mutually independent.
\end{remark}

For $M=2$, the random field defined in (\ref{SRF0}) reduces to the SRF $\{S(s,t),\ (s,t)\in\mathbb{R}^2_+\}$ on $\mathbb{R}^2_+$ defined as follows:
\begin{equation}\label{srf1}
	S(s,t)\coloneqq N_1(s,t)-N_2(s,t),
\end{equation} 
where $\{N_1(s,t),\ (s,t)\in\mathbb{R}^2_+\}$ and $\{N_2(s,t),\ (s,t)\in\mathbb{R}^2_+\}$ are independent Poisson random fields on positive plane with parameter $\lambda_1>0$ and $\lambda_2>0$, respectively. Its distribution $p(n,s,t)=\mathrm{Pr}\{S(s,t)=n\}$ is given by
\begin{equation}\label{srfdist}
	p(n,s,t)=e^{-(\lambda_1+\lambda_2)st}\bigg(\frac{\lambda_1}{\lambda_2}\bigg)^{n/2}I_{|n|}(2\sqrt{\lambda_1\lambda_2}st),\ n\in\mathbb{Z}.
\end{equation}
Its mean, variance and covariance are  $\mathbb{E}S(s,t)=(\lambda_1-\lambda_2)st$,  $\mathbb{V}\mathrm{ar}S(s,t)=(\lambda_1+\lambda_2)st$ and
$\mathbb{C}\mathrm{ov}(S(s,t),S(s',t'))=(\lambda_1+\lambda_2)(s\wedge s')(t\wedge t')$ for all $(s,t)$ and $(s',t')$ in $\mathbb{R}^2_+$.
\begin{remark}\label{rem37}
	The two parameter SRF defined in (\ref{srf1}) satisfies the following equality:
	$
		S(s,t)\overset{d}{=}\sum_{j=1}^{N(s,t)}Y_j,
	$
	where $Y_j$'s are random variables as defined in Remark \ref{rem3.4} and $\{N(s,t),\ (s,t)\in\mathbb{R}^2_+\}$ is a PRF with parameter $\lambda_1+\lambda_2$ which is independent of $Y_j$'s.
\end{remark}
\begin{remark}
	On taking the derivative with respect to $s$ on both sides of (\ref{srfdist}) and using the following result:
	\begin{equation*}
		\frac{\partial}{\partial x}I_{\nu}(x)=\frac{I_{\nu-1}(x)+I_{\nu+1}(x)}{2},
	\end{equation*}
	we get the system of governing differential equations for distribution (\ref{srfdist}) as follows:
	\begin{equation}\label{difeq1}
		\frac{\partial}{\partial s}p(n,s,t)=-(\lambda_1+\lambda_2)tp(n,s,t)+\lambda_1 tp(n-1,s,t)+\lambda_2 tp(n+1,s,t),\ n\in\mathbb{Z},
	\end{equation}
	with $p(0,0,t)=1$ and $p(n,0,t)=0$ for $n\ne0$ and for all $t\ge0$. Similarly, it satisfies
	\begin{equation*}
		\frac{\partial}{\partial t}p(n,s,t)=-(\lambda_1+\lambda_2)sp(n,s,t)+\lambda_1 sp(n-1,s,t)+\lambda_2 sp(n+1,s,t),\ n\in\mathbb{Z}.
	\end{equation*} 
	
	From (\ref{pgf1}), it follows that the pgf $G(u,s,t)=\mathbb{E}u^{S(s,t)}$, $0<u\leq1$ of SRF is
	\begin{equation}\label{srfpgf}
		G(u,s,t)=\exp\bigg(\lambda_1st(u-1)+\lambda_2st\bigg(\frac{1}{u}-1\bigg)\bigg).
	\end{equation}
	On differentiating it with respect to $s$, we get
	\begin{equation*}
		\frac{\partial}{\partial s}G(u,s,t)=t\bigg(\lambda_1(u-1)+\lambda_2\bigg(\frac{1}{u}-1\bigg)\bigg)\exp\bigg(\lambda_1st(u-1)+\lambda_2st\bigg(\frac{1}{u}-1\bigg)\bigg),
	\end{equation*}
	which again on differentiating with respect to $t$ yields
	\begin{align*}
		\frac{\partial^2}{\partial t\partial s}G(u,s,t)&=\bigg(\lambda_1(u-1)+\lambda_2\bigg(\frac{1}{u}-1\bigg)\bigg)G(u,s,t)\\
		&\ \ +st\bigg(\lambda_1(u-1)+\lambda_2\bigg(\frac{1}{u}-1\bigg)\bigg)^2\exp\bigg(\lambda_1st(u-1)+\lambda_2st\bigg(\frac{1}{u}-1\bigg)\bigg). 
	\end{align*}
	Thus, by using 
	\begin{equation*}
		\frac{\partial}{\partial u}G(u,s,t)=st\bigg(\frac{\lambda_1u^2-\lambda_2}{u^2}\bigg)G(u,s,t),
	\end{equation*}
	for all $0<u\leq1$ such that $\lambda_1 u^2-\lambda_2\ne0$, we get the following governing partial differential equation for pgf of SRF:
	\begin{equation}\label{pgf2}
		\frac{\partial^2}{\partial t\partial s}G(u,s,t)=\bigg(\lambda_1(u-1)+\lambda_2\bigg(\frac{1}{u}-1\bigg)\bigg)G(u,s,t)+\frac{(\lambda_1(u-1)u+\lambda_2(1-u))^2}{\lambda_1 u^2-\lambda_2}\frac{\partial}{\partial u}G(u,s,t),
	\end{equation}
	with initial condition $G(u,0,t)=G(u,s,0)=1$.
\end{remark}
\section{Integrals of SRF}\label{sec5} The Riemann-Liouville fractional integral of one parameter generalized Skellam process is introduced and studied in \cite{Cinque2025}. Here, we study integrals of GSRF over rectangles in plane. First, we recall the definition of Riemann-Liouville fractional integral.
\begin{definition}
	For an integrable function $f$, its Riemann-Liouville intergal is defined by
	\begin{equation*}
		I^\nu_t f(t)=\frac{1}{\Gamma(\nu)}\int_{0}^{t}\frac{f(s)}{(t-s)^{1-\nu}}\,\mathrm{d}s,\ \nu>0.
	\end{equation*}
\end{definition}
Let $\{N(s,t),\ (s,t)\in\mathbb{R}^2_+\}$ be the PRF with parameter $\lambda>0$. Its Riemann-Liouville fractional integral is defined as follows:
\begin{equation}\label{frint}
	X^{\nu_1,\nu_2}(s,t)=\frac{1}{\Gamma(\nu_1)\Gamma(\nu_2)}\int_{0}^{s}\int_{0}^{t}(s-x)^{\nu_1-1}(t-y)^{\nu_2-1}N(x,y)\,\mathrm{d}x\,\mathrm{d}y,\ \nu_i>0,\ i=1,2,\ (s,t)\in\mathbb{R}^2_+.
\end{equation}
It was studied in \cite{Kataria2025}. The mean and variance of $X^{\nu_1,\nu_2}(s,t)$ are given by 
\begin{equation*}
	\mathbb{E}X^{\nu_1,\nu_2}(s,t)=\frac{\lambda s^{\nu_1+1}t^{\nu_2+1}}{\Gamma(\nu_1+2)\Gamma(\nu_2+2)}
\end{equation*}
and
\begin{equation*}
	\mathbb{V}\mathrm{ar}X^{\nu_1,\nu_2}(s,t)=\frac{\lambda s^{2\nu_1+1}t^{2\nu_2+1}}{\prod_{i=1}^{2}\Gamma(2\nu_i+1)\Gamma^2(\nu_i+1)},\ (s,t)\in\mathbb{R}^2_+,
\end{equation*}
respectively. For $\nu_1=\nu_2=1$, the integral (\ref{frint}) reduces to the Riemann integral of PRF defined by
\begin{equation}\label{rint}
	X(s,t)\coloneqq\int_{0}^{s}\int_{0}^{t}N(x,y)\,\mathrm{d}x\,\mathrm{d}y,\ (s,t)\in\mathbb{R}^2_+.
\end{equation}

The following result extends the result of \cite{Xia2018}, to the case of two-parameter L\'evy processes.
\begin{proposition}\label{lemma1}
	Let $\{X(s,t),\ (s,t)\in\mathbb{R}^2_+\}$ be as defined in (\ref{rint}). Then, its characteristic function has the following representation:
	\begin{equation}\label{lemma1res}
		\mathbb{E}\exp\big(i\xi X(s,t)\big)=\exp\bigg(\lambda st\int_{0}^{1}\int_{0}^{1}(e^{i\xi st xy}-1)\,\mathrm{d}x\,\mathrm{d}y\bigg),\ (s,t)\in\mathbb{R}^2_+,\ \xi\in\mathbb{R}.
	\end{equation}
\end{proposition}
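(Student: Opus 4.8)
The plan is to represent $X(s,t)$ as a linear functional of the underlying Poisson random measure and then apply the exponential (Campbell) formula for Poisson integrals. Viewing the PRF as a random measure $N(\mathrm{d}u\,\mathrm{d}v)$ on $\mathbb{R}^2_+$ with intensity $\lambda\,\mathrm{d}u\,\mathrm{d}v$, one has $N(x,y)=\int \mathbf{1}\{u\le x,\ v\le y\}\,N(\mathrm{d}u\,\mathrm{d}v)$. I would substitute this into (\ref{rint}) and interchange the deterministic Lebesgue integration over $(x,y)\in[0,s]\times[0,t]$ with the integration against $N$. Since $N$ restricted to the bounded rectangle $[0,s]\times[0,t]$ is almost surely a finite sum of Dirac masses and the integrand is nonnegative, this interchange is immediate and gives
\[
	X(s,t)=\int_{[0,s]\times[0,t]}\bigg(\int_u^s\int_v^t \mathrm{d}x\,\mathrm{d}y\bigg)\,N(\mathrm{d}u\,\mathrm{d}v)=\int_{[0,s]\times[0,t]}(s-u)(t-v)\,N(\mathrm{d}u\,\mathrm{d}v);
\]
equivalently, $X(s,t)=\sum_{(u,v)}(s-u)(t-v)$, the sum running over the finitely many points of the PRF inside $[0,s]\times[0,t]$.

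Next I would compute the characteristic function by conditioning on the count $N([0,s]\times[0,t])$, which is Poisson with mean $\lambda st$; given this count equals $n$, the $n$ points are independent and uniform on $[0,s]\times[0,t]$, so
\[
	\mathbb{E}\big[e^{i\xi X(s,t)}\,\big|\,N([0,s]\times[0,t])=n\big]=\bigg(\frac{1}{st}\int_0^s\int_0^t e^{i\xi(s-u)(t-v)}\,\mathrm{d}u\,\mathrm{d}v\bigg)^n.
\]
Averaging over $n$ against the Poisson weights $e^{-\lambda st}(\lambda st)^n/n!$ sums the exponential series and yields $\mathbb{E}e^{i\xi X(s,t)}=\exp\big(\lambda st(\phi-1)\big)$ with $\phi=\frac{1}{st}\int_0^s\int_0^t e^{i\xi(s-u)(t-v)}\,\mathrm{d}u\,\mathrm{d}v$ (one may instead quote Campbell's theorem directly for $\int g\,\mathrm{d}N$ with the bounded weight $g(u,v)=(s-u)(t-v)$). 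Finally, the substitution $u=s(1-x)$, $v=t(1-y)$ turns $\int_0^s\int_0^t e^{i\xi(s-u)(t-v)}\,\mathrm{d}u\,\mathrm{d}v$ into $st\int_0^1\int_0^1 e^{i\xi st xy}\,\mathrm{d}x\,\mathrm{d}y$, so that $\lambda st(\phi-1)=\lambda st\int_0^1\int_0^1(e^{i\xi st xy}-1)\,\mathrm{d}x\,\mathrm{d}y$, which is exactly (\ref{lemma1res}).

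The computation is short once the weight representation of $X(s,t)$ is in hand, so the only step that calls for any care is the first one: confirming the pathwise identity $X(s,t)=\int(s-u)(t-v)\,N(\mathrm{d}u\,\mathrm{d}v)$ and the legitimacy of swapping the two integrations. This is elementary, since almost every realization of the PRF has only finitely many points in the bounded rectangle $[0,s]\times[0,t]$; the remaining manipulations (conditioning on the Poisson count, resumming the exponential series, and a linear change of variables) are routine. I would also remark that a further one-dimensional reduction of $e^{i\xi st xy}$ recovers the one-parameter formula of \cite{Xia2018}, which is what the proposition is stated to extend.
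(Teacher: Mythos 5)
Your proof is correct, but it takes a genuinely different route from the paper. The paper never invokes the point-process (random measure) structure of the PRF: it discretizes the Riemann integral, applies a two-dimensional Abel summation to rewrite the double Riemann sum as a weighted sum of rectangular increments $\Delta N$ over a grid, factors the characteristic function using independence and stationarity of those increments, and then recognizes the resulting log-sum as a Riemann sum converging to $\lambda st\int_0^1\int_0^1(e^{i\xi stxy}-1)\,\mathrm{d}x\,\mathrm{d}y$. You instead exploit the representation $N(x,y)=\int\mathbf{1}\{u\le x,\,v\le y\}\,N(\mathrm{d}u\,\mathrm{d}v)$, swap integrals to obtain the pathwise identity $X(s,t)=\sum_{(u,v)}(s-u)(t-v)$ over the finitely many atoms in $[0,s]\times[0,t]$, and finish with conditional uniformity of the points (equivalently Campbell's theorem) plus a linear change of variables. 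Both arguments are complete; all of your steps check out, including the substitution $u=s(1-x)$, $v=t(1-y)$. Your approach is shorter, avoids the double limit and the informal $\ln(1+x)\sim x$ bookkeeping, and yields the explicit almost-sure weight representation of $X(s,t)$ as a byproduct. What the paper's approach buys is generality: since it uses only independent stationary rectangular increments and the form $\mathbb{E}e^{i\xi Y(s,t)}=\phi(\xi)^{st}$, it extends verbatim to arbitrary two-parameter L\'evy processes (this is exactly Remark \ref{rem5.1}, which is then used in Proposition \ref{prop5.2} for compound Poisson fields), whereas your conditional-uniformity argument is tied to the Poisson point-process structure and would need Campbell's formula for marked point processes, or a separate argument, to cover those later applications.
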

\begin{proof}
	For any $(s,t)\in\mathbb{R}^2_+$, we have
	\begin{align*}
		X(s,t)&=\int_{0}^{t}\lim_{k\to\infty}\frac{s}{k}\sum_{r=1}^{k}N(rs/k,y)\,\mathrm{d}y\\
		&=\lim_{k\to\infty}\lim_{m\to\infty}\frac{st}{km}\sum_{r=1}^{k}\sum_{j=1}^{m}N(rs/k,jt/m)\\
		&=\lim_{k\to\infty}\lim_{m\to\infty}\frac{st}{km}\sum_{r=1}^{k}\sum_{j=1}^{m}(m-j+1)(N(rs/k,jt/m)-N(rs/k,(j-1)t/m))\\
		&=\lim_{k\to\infty}\lim_{m\to\infty}\frac{st}{km}\sum_{r=1}^{k}\sum_{j=1}^{m}(k-r+1)(m-j+1)(N(rs/k,jt/m)-N((r-1)s/k,jt/m)\\
		&\hspace{5cm}-N(rs/k,(j-1)t/m)+N((r-1)s/k,(j-1)t/m))\\
		&=\lim_{k\to\infty}\lim_{m\to\infty}\frac{st}{km}\sum_{r=1}^{k}\sum_{j=1}^{m}(k-r+1)(m-j+1)\Delta_{(r-1)s/k,(j-1)t/m}N(rs/k,jt/m).
	\end{align*}
	From  independent and stationary rectangular increments properties of PRF, the characteristic function of $X(s,t)$ is given by
	\begin{align*}
		\mathbb{E}e^{i\xi X(s,t)}&=\lim_{k\to\infty}\lim_{m\to\infty}\prod_{r=1}^{k}\prod_{j=1}^{m}\mathbb{E}\exp\bigg(i\frac{\xi st}{km}(k-r+1)(m-j+1)N(s/k,t/m)\bigg)\\
		&=\lim_{k\to\infty}\lim_{m\to\infty}\mathbb{E}\exp\bigg(i\frac{\xi st}{km}\sum_{r=1}^{k}\sum_{j=1}^{m}(k-r+1)(m-j+1)N(s/k,t/m)\bigg)\\
		&=\lim_{k\to\infty}\lim_{m\to\infty}\mathbb{E}\exp\bigg(i\frac{\xi st}{km}\sum_{r=1}^{k}\sum_{j=1}^{m}rjN(s/k,t/m)\bigg)\\
		&=\exp\bigg(\lim_{k\to\infty}\lim_{m\to\infty}\sum_{r=1}^{k}\sum_{j=1}^{m}\ln\mathbb{E}\exp\bigg(i\frac{\xi strj}{km}N(s/k,t/m)\bigg)\bigg)\\
		&=\exp\bigg(\lim_{k\to\infty}\lim_{m\to\infty}\frac{st}{km}\sum_{r=1}^{k}\sum_{j=1}^{m}\ln\mathbb{E}\exp\bigg(i\frac{\xi strj}{km}N(1,1)\bigg)\bigg)\\
		&=\exp\bigg(\lim_{k\to\infty}\lim_{m\to\infty}\frac{st}{km}\sum_{i=1}^{k}\sum_{j=1}^{m}\ln\exp\big(\lambda(e^{i\xi strj/km}-1)\big)\bigg),
	\end{align*}
	where the last two steps follow from (\ref{chprf}). This completes the proof.
\end{proof}
\begin{remark}\label{rem5.1}
	A similar identity as in (\ref{lemma1res}) can be obtained for the integral of a general two parameter L\'evy process (for existence and characterization see \cite{Straf1972}). Let $\{Y(s,t),\ (s,t)\in\mathbb{R}^2_+\}$ be a two parameter real valued L\'evy process with rectangular increment (for definition see (\ref{recinc})). Then, from the result of \cite{Straf1972}, it follows that $\mathbb{E}\exp(i\xi Y(s,t))=(\phi(\xi))^{st}$, $\xi\in\mathbb{R}$, where $\phi$ is the characteristics function of some infinitely divisible distribution on $\mathbb{R}$. Then, following the proof of Proposition  \ref{lemma1}, it can be established that 
	\begin{equation}\label{lemma1rem}
		\mathbb{E}\exp\bigg(i\xi\int_{0}^{t}\int_{0}^{s}Y(x,y)\,\mathrm{d}x\,\mathrm{d}y\bigg)=\exp\bigg(st\int_{0}^{1}\int_{0}^{1}\ln\phi(\xi st xy)\,\mathrm{d}x\,\mathrm{d}y\bigg),\ \xi\in\mathbb{R}.
	\end{equation}
	In particular, in the case of PRF, we have $\phi(\xi)=\exp(\lambda(e^{i\xi}-1))$, which on substituting in (\ref{lemma1rem}) yields (\ref{lemma1res}).
\end{remark}

Let $\{S(s,t),\ (s,t)\in\mathbb{R}^2_+\}$ be the two parameter generalized Skellam process as defined in (\ref{gsrf2}). For $\nu_i>0$, $i=1,2$, its fractional integral is defined as 
\begin{equation*}
	S^{\nu_1,\nu_2}(s,t)\coloneqq\frac{1}{\Gamma(\nu_1)\Gamma(\nu_2)}\int_{0}^{s}\int_{0}^{t}(s-x)^{\nu_1-1}(t-y)^{\nu_2-1}S(x,y)\,\mathrm{d}x\,\mathrm{d}y,\ (s,t)\in\mathbb{R}^2_+.
\end{equation*}
It is given by 
\begin{equation*}
		S^{\nu_1,\nu_2}(s,t)=\sum_{j\in\mathcal{J}}jX_j^{\nu_1,\nu_2}(s,t),\ (s,t)\in\mathbb{R}^2_+,
\end{equation*}
where $X_j^{\nu_1,\nu_2}(s,t)$ is the Riemann-Liouville integral of the PRF $N_j(s,t)$ as defined in (\ref{frint}) for each $j\in\mathcal{ J}$. Hence, its mean and variance are 
\begin{equation*}
	\mathbb{E}S^{\nu_1,\nu_2}(s,t)=\frac{\sum_{j\in\mathcal{J}}j\lambda_j s^{\nu_1+1}t^{\nu_2+1}}{\Gamma(\nu_1+2)\Gamma(\nu_2+2)}\ \text{and}\ \mathbb{V}\mathrm{ar}S^{\nu_1,\nu_2}(s,t)=\frac{\sum_{j\in\mathcal{J}}j^2\lambda_j s^{2\nu_1+1}t^{2\nu_2+1}}{\prod_{i=1}^{2}\Gamma(2\nu_i+1)\Gamma^2(\nu_i+1)},\ (s,t)\in\mathbb{R}^2_+,
\end{equation*}
respectively.
\begin{proposition}
	The characteristic function of Riemann integral of GSRF $\{\mathcal{S}(s,t),\ (s,t)\in\mathbb{R}^2_+\}$ defined in (\ref{gsrf2}) has the following representation:
	\begin{equation*}
		\mathbb{E}\exp\bigg(i\xi\int_{0}^{s}\int_{0}^{t}S(x,y)\,\mathrm{d}x\,\mathrm{d}y\bigg)=\exp\bigg(\sum_{j\in\mathcal{J}}\lambda_j st\int_{0}^{1}\int_{0}^{1}(e^{ij\xi st xy}-1)\,\mathrm{d}x\,\mathrm{d}y\bigg),\ (s,t)\in\mathbb{R}^2_+,\ \xi\in\mathbb{R}.
	\end{equation*}
\end{proposition}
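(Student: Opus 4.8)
The plan is to reduce the statement to Proposition~\ref{lemma1} by exploiting linearity of the Riemann integral together with the mutual independence of the constituent Poisson random fields. First I would use the definition (\ref{gsrf2}) and interchange the finite sum with the double integral to write
\begin{equation*}
	\int_{0}^{s}\int_{0}^{t}\mathcal{S}(x,y)\,\mathrm{d}x\,\mathrm{d}y=\sum_{j\in\mathcal{J}}j\int_{0}^{s}\int_{0}^{t}N_j(x,y)\,\mathrm{d}x\,\mathrm{d}y=\sum_{j\in\mathcal{J}}jX_j(s,t),
\end{equation*}
where $X_j(s,t)$ denotes the Riemann integral of the PRF $\{N_j(s,t),\ (s,t)\in\mathbb{R}^2_+\}$ with parameter $\lambda_j$, in the sense of (\ref{rint}). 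Since the fields $\{N_j(s,t)\}$, $j\in\mathcal{J}$, are independent and each $X_j(s,t)$ is a measurable functional of the single field $N_j$, the random variables $X_j(s,t)$, $j\in\mathcal{J}$, are independent as well.

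Next I would factorize the characteristic function over the finite index set:
\begin{equation*}
	\mathbb{E}\exp\bigg(i\xi\int_{0}^{s}\int_{0}^{t}\mathcal{S}(x,y)\,\mathrm{d}x\,\mathrm{d}y\bigg)=\mathbb{E}\exp\bigg(i\xi\sum_{j\in\mathcal{J}}jX_j(s,t)\bigg)=\prod_{j\in\mathcal{J}}\mathbb{E}\exp\big(i(j\xi)X_j(s,t)\big).
\end{equation*}
To each factor I would apply Proposition~\ref{lemma1}, with rate parameter $\lambda_j$ in place of $\lambda$ and with the argument $j\xi$ in place of $\xi$ (the identity in Proposition~\ref{lemma1} being valid for all real arguments), which gives
\begin{equation*}
	\mathbb{E}\exp\big(i(j\xi)X_j(s,t)\big)=\exp\bigg(\lambda_j st\int_{0}^{1}\int_{0}^{1}(e^{ij\xi st xy}-1)\,\mathrm{d}x\,\mathrm{d}y\bigg).
\end{equation*}
Taking the product over $j\in\mathcal{J}$ and combining the exponentials yields the asserted formula.

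The argument is essentially routine, and I do not anticipate a genuine obstacle: the substance of the computation is entirely carried by Proposition~\ref{lemma1}. The only points that merit a word of justification are the interchange of the finite sum with the double integral, which is immediate by linearity of the Riemann integral since the integrand is a.s.\ locally bounded (the PRFs being c\`adl\`ag), and the independence of the $X_j(s,t)$, which follows from the independence of the underlying fields $N_j$. One could alternatively note that $\{\mathcal{S}(s,t),\ (s,t)\in\mathbb{R}^2_+\}$ is itself a two-parameter L\'evy process with rectangular increments and invoke Remark~\ref{rem5.1} with $\phi(\xi)=\exp\big(\sum_{j\in\mathcal{J}}\lambda_j(e^{ij\xi}-1)\big)$, obtaining the same conclusion.
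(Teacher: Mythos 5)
Your proposal is correct and matches the paper's (very terse) proof, which likewise reduces the claim to Proposition \ref{lemma1} via the decomposition $\mathcal{S}(s,t)=\sum_{j\in\mathcal{J}}jN_j(s,t)$ and the independence of the fields $N_j$. You simply spell out the factorization of the characteristic function and the substitution $\xi\mapsto j\xi$, $\lambda\mapsto\lambda_j$ that the paper leaves implicit.
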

\begin{proof}
	In view of Proposition \ref{lemma1}, its proof follows from the definition of $S(s,t)$.
\end{proof}

The following result show that the integral of SRF equals in distribution to a scaled compound Poisson random field. A similar result is holds for the case of one parameter generalized Skellam process (see \cite[Proposition 3.2]{Cinque2025}).
\begin{proposition}\label{prop5.2}
	Let $\{N(s,t),\ (s,t)\in\mathbb{R}^2_+\}$ be a PRF with parameter $\lambda>0$. Let us consider a compound Poisson random field $Y(s,t)=\sum_{r=1}^{N(s,t)}X_r$, where $\{X_r\}_{r\ge1}$ is sequence of iid random variables independent of the PRF. Then, its integral over rectangle $[0,s]\times[0,t]$ satisfies the following equality:
	\begin{equation*}
		\int_{0}^{s}\int_{0}^{t}Y(x,y)\,\mathrm{d}x\,\mathrm{d}y\overset{d}{=}st\sum_{r=1}^{N(s,t)}X_rU_r,\ (s,t)\in\mathbb{R}^2_+,
	\end{equation*}
	where $U_1,U_2,\dots$ are iid $Uniform\,[0,1]\times[0,1]$ random variables that are independent of the PRF  $\{N(s,t),\ (s,t)\in\mathbb{R}^2_+\}$ and $\{X_r\}_{r\ge1}$.
\end{proposition}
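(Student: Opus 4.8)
The plan is to pass to the marked-point-process description of the compound Poisson random field and then interchange the integral with the (finite) sum over the atoms of the PRF lying in the rectangle $R=[0,s]\times[0,t]$. I read $Y$ as obtained by attaching to each atom of $\{N(x,y)\}$ an independent copy of $X_1$, so that $Y(B)=\sum_{\text{atoms }\omega\in B}X(\omega)$ for Borel $B$; in particular, if the atoms in $R$ are enumerated as $(\sigma_1,\tau_1),\dots,(\sigma_{N(s,t)},\tau_{N(s,t)})$ with marks $X_1,X_2,\dots$, then $Y(x,y)=\sum_{r}X_r\mathbf{1}_{\{\sigma_r\le x,\ \tau_r\le y\}}$ for $(x,y)\in R$. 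This is the reading under which $Y$ inherits the independent and stationary rectangular increments of $N$.

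First I would recall the classical fact that, conditionally on $\{N(s,t)=n\}$, the atoms $(\sigma_1,\tau_1),\dots,(\sigma_n,\tau_n)$ are iid uniform on $R$ and independent of the marks $\{X_r\}$. Since $N(s,t)<\infty$ almost surely and, for each fixed $(x,y)$, the sum defining $Y(x,y)$ has only finitely many nonzero terms, linearity of the integral gives
\begin{equation*}
	\int_{0}^{s}\int_{0}^{t}Y(x,y)\,\mathrm{d}x\,\mathrm{d}y=\sum_{r=1}^{N(s,t)}X_r\bigg(\int_{\sigma_r}^{s}\mathrm{d}x\bigg)\bigg(\int_{\tau_r}^{t}\mathrm{d}y\bigg)=\sum_{r=1}^{N(s,t)}X_r(s-\sigma_r)(t-\tau_r).
\end{equation*}
Conditionally on $\{N(s,t)=n\}$, the variables $V_r:=(s-\sigma_r)/s$ and $W_r:=(t-\tau_r)/t$, $r=1,\dots,n$, are mutually independent, uniform on $[0,1]$, and independent of $\{X_r\}$, so $(s-\sigma_r)(t-\tau_r)=st\,V_rW_r$ and, conditionally on $N(s,t)=n$,
\begin{equation*}
	\sum_{r=1}^{n}X_r(s-\sigma_r)(t-\tau_r)\overset{d}{=}st\sum_{r=1}^{n}X_rV_rW_r.
\end{equation*}
The right-hand side is precisely $st\sum_{r=1}^{n}X_rU_r$ once $U_r$ is identified with the pair $(V_r,W_r)$, i.e.\ $X_rU_r$ is read as $X_rV_rW_r$ as in the statement; averaging over the Poisson law of $N(s,t)$ then yields the claimed equality in distribution.

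As a cross-check, and as an alternative avoiding the point-process bookkeeping, one can argue at the level of characteristic functions: the discretization scheme in the proof of Proposition \ref{lemma1}, applied verbatim to $Y$ (which has independent and stationary rectangular increments and satisfies $\mathbb{E}e^{i\xi Y(s,t)}=\exp(\lambda st(\mathbb{E}e^{i\xi X_1}-1))$ on small rectangles), gives
\begin{equation*}
	\mathbb{E}\exp\bigg(i\xi\int_{0}^{s}\int_{0}^{t}Y(x,y)\,\mathrm{d}x\,\mathrm{d}y\bigg)=\exp\bigg(\lambda st\int_{0}^{1}\int_{0}^{1}\big(\mathbb{E}e^{i\xi st xy X_1}-1\big)\,\mathrm{d}x\,\mathrm{d}y\bigg),
\end{equation*}
and the right-hand side is exactly the characteristic function of the compound Poisson variable $st\sum_{r=1}^{N(s,t)}X_rU_r$, whose Poisson intensity is $\lambda st$ and whose summand law is that of $stX_1U_1$. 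I expect the only real difficulty to be one of bookkeeping rather than analysis: making the marked-point-process representation of $Y$ on $R$ precise and checking that the conditional uniformity and the independence from the marks hold jointly over all $n$ atoms — which is what legitimizes the conditional identity above — after which the interchange of sum and integral and the uniform-distribution computation are routine.
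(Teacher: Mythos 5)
Your proposal is correct, and your primary argument is genuinely different from the paper's. The paper proves this proposition purely at the level of characteristic functions: it invokes the fact (from the cited reference) that the compound Poisson field is a two-parameter L\'evy process, applies the integral formula of Remark 5.1 with $\phi(\xi)=\exp(\lambda(\mathbb{E}e^{i\xi X_1}-1))$, and then recognizes $\exp(\lambda st(\int_0^1\int_0^1\mathbb{E}e^{i\xi stxyX_1}\,\mathrm{d}x\,\mathrm{d}y-1))$ as the characteristic function of $st\sum_{r=1}^{N(s,t)}X_rU_r$ --- which is exactly your ``cross-check,'' so that part of your write-up reproduces the paper's route. Your main argument instead works pathwise: passing to the marked-point-process version of $Y$, interchanging the (a.s.\ finite) sum with the integral to get the exact identity $\int_0^s\int_0^tY(x,y)\,\mathrm{d}x\,\mathrm{d}y=\sum_{r=1}^{N(s,t)}X_r(s-\sigma_r)(t-\tau_r)$, and then using the conditional uniformity of Poisson atoms given $N(s,t)=n$. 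This buys more than the paper's proof: it exhibits an almost-sure representation and explains \emph{where} the uniform variables come from (they are the rescaled atom coordinates), and it correctly matches the paper's implicit convention that $X_rU_r$ means $X_r$ times the product of the two coordinates of $U_r$. The cost is the bookkeeping you acknowledge --- one must fix the marked-point-process version of $Y(x,y)=\sum_{r=1}^{N(x,y)}X_r$ (harmless, since the marks are iid and independent of positions, so the two versions have the same finite-dimensional distributions and hence the same law of the integral) and invoke the standard fact that, given the count, the atoms are iid uniform on the rectangle jointly and independently of the marks. Both routes are sound; yours is the more elementary and more informative of the two, while the paper's extends verbatim to arbitrary two-parameter L\'evy integrands via Remark 5.1.
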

\begin{proof}
	In \cite{Vishwakarma2025b}, it is shown that the compound Poisson random field is a two parameter L\'evy process. Hence, from Remark \ref{rem5.1}, it follows that 
	\begin{align*}
		\mathbb{E}\exp\bigg(i\xi\int_{0}^{s}\int_{0}^{t}Y(x,y)\,\mathrm{d}x\,\mathrm{d}y\bigg)&=\exp\bigg(st\int_{0}^{1}\int_{0}^{1}\ln\mathbb{E}e^{i\xi st xyY(1,1)}\,\mathrm{d}x\,\mathrm{d}y\bigg),\ \xi\in\mathbb{R},\\
		&=\exp\bigg(st\int_{0}^{1}\int_{0}^{1}\ln\mathbb{E}\big(\mathbb{E}e^{i\xi st xyX_1}\big)^{N(1,1)}\,\mathrm{d}x\,\mathrm{d}y\bigg)\\
		&=\exp\bigg(st\lambda\int_{0}^{1}\int_{0}^{1}(\mathbb{E}e^{i\xi st xyX_1}-1)\,\mathrm{d}x\,\mathrm{d}y\bigg)\\
		&=\exp\bigg(st\lambda\bigg(\int_{0}^{1}\int_{0}^{1}\mathbb{E}e^{i\xi st xyX_1}\,\mathrm{d}x\,\mathrm{d}y-1\bigg)\bigg)\\
		&=\mathbb{E}\bigg(\int_{0}^{1}\int_{0}^{1}\mathbb{E}e^{i\xi st xyX_1}\,\mathrm{d}x\,\mathrm{d}y\bigg)^{N(s,t)}\\
		&=\mathbb{E}\big(\mathbb{E}e^{i\xi stX_1U_1}\big)^{N(s,t)}=\mathbb{E}\exp\bigg(i\xi st\sum_{r=1}^{N(s,t)}X_rU_r\bigg).
	\end{align*}
	This completes the proof.
\end{proof}
\begin{remark}
	Note that the  GSRF $\{\mathcal{S}(s,t),\ (s,t)\in\mathbb{R}^2_+\}$ as defined in (\ref{gsrf2}) satisfies $\mathcal{S}(s,t)\overset{d}{=}\sum_{r=1}^{N(s,t)}\mathcal{Y}_r$, where $\mathcal{Y}_r$'r are iid random variables as defined in Proposition \ref{gsrfcrep} and $\{N(s,t),\ (s,t)\in\mathbb{R}^2_+\}$ is a PRF with parameter $\sum_{j\in\mathcal{J}}\lambda_j>0$. Hence, from Remark \ref{rem5.1}, we have 
	\begin{equation*}
		\int_{0}^{s}\int_{0}^{t}\mathcal{S}(x,y)\,\mathrm{d}x\,\mathrm{d}y\overset{d}{=}\int_{0}^{s}\int_{0}^{t}\sum_{r=1}^{N(x,y)}\mathcal{Y}_r\,\mathrm{d}x\,\mathrm{d}y,\ (s,t)\in\mathbb{R}^2_+.
	\end{equation*}
	Thus, from Proposition \ref{prop5.2}, we get
	\begin{equation*}
		\int_{0}^{s}\int_{0}^{t}\mathcal{S}(x,y)\,\mathrm{d}x\,\mathrm{d}y\overset{d}{=}st\sum_{r=1}^{N(s,t)}\mathcal{Y}_rU_r,\ (s,t)\in\mathbb{R}^2_+.
	\end{equation*}
	where $U_r$'s are iid $Uniform\,[0,1]\times[0,1]$ random variables that are independent of  $\{N(s,t),\ (s,t)\in\mathbb{R}^2_+\}$ and $\{\mathcal{Y}_r\}_{r\ge1}$. 
\end{remark}
\section{Fractional Skellam random fields on $\mathbb{R}^2_+$}\label{sec4}
In this section, we consider three different fractional variants of the Skellam random field on the positive quadrant of the plane. In the first type, we define it as a time-changed SRF, where the time-changing component is a bivariate random process whose marginals are independent inverse stable subordinators.

\subsection{Fractional Skellam random field-I} Let $\{E_1^{\alpha}(t),\ t\ge0\}$ and $\{E_2^{\beta}(t),\ t\ge0\}$ be inverse stable subordinators of indices $\alpha\in(0,1)$ and $\beta\in(0,1)$, respectively. Let $\{S(s,t),\ (s,t)\in\mathbb{R}^2_+\}$ be a SRF on $\mathbb{R}^2_+$. We consider a random field $\{S^{\alpha,\beta}(s,t),\ (s,t)\in\mathbb{R}^2_+\}$ defined by
\begin{equation}\label{fsrf1}
	S^{\alpha,\beta}(s,t)\coloneqq S(E_1^{\alpha}(s),E_2^{\beta}(t))=N_1(E_1^{\alpha}(s),E_2^{\beta}(t))-N_2(E_1^{\alpha}(s),E_2^{\beta}(t)).
\end{equation}
It is assumed that all the component processes appearing in (\ref{fsrf1}) are independent of each other. We call it the fractional Skellam random field of type one (FSRF-I). Moreover, for $\alpha=\beta=1$, we have $S^{1,1}(s,t)=S(s,t)$.
\begin{remark}
	Let $\{Y_j\}_{j\ge1}$ be iid random variable as defined in Remark \ref{rem3.4}, and $\{N(s,t),\ (s,t)\in\mathbb{R}^2_+\}$ be the PRF. Then, the following equality holds:
$
		S^{\alpha,\beta}(s,t)\overset{d}{=}\sum_{j=1}^{N(E_1^\alpha(s),E_2^\beta(t))}Y_j,
$
	where the  $\{E_1^{\alpha}(t),\ t\ge0\}$ and $\{E_2^{\beta}(t),\ t\ge0\}$ are inverse stable subordinators. All the variables appearing here are mutually independent. Thus, the FSRF-I is a time fractional compound Poisson random field in the sense of \cite{Vishwakarma2025b}.
\end{remark}
The following result shows that the FSRF-I is a time-changed Skellam process where the time changing component is a two parameter random process defined as a product of two independent inverse stable subordinators.
\begin{proposition}
	Let $\{E_1^{\alpha}(t),\ t\ge0\}$ and $\{E_2^{\beta}(t),\ t\ge0\}$ be independent inverse stable subordinators which are independent of the Skellam process $\{S(t),\ t\ge0\}$ defined in (\ref{spdef}). Then, FSRF-I satisfies the following time-changed representation:
	\begin{equation*}
		S^{\alpha,\beta}(s,t)\overset{d}{=}S(E_1^{\alpha}(s)E_2^{\beta}(t)),\ (s,t)\in\mathbb{R}^2_+.
	\end{equation*}
\end{proposition}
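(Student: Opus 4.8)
The plan is to fix $(s,t)\in\mathbb{R}^2_+$ and compare the probability generating functions of the two sides; since both random variables take values in $\mathbb{Z}$, coincidence of their pgf's on $(0,1]$ gives the asserted equality in distribution. Throughout I write $W\coloneqq E_1^{\alpha}(s)E_2^{\beta}(t)$, a nonnegative random variable, and $\psi(u)\coloneqq \lambda_1(u-1)+\lambda_2\big(\tfrac1u-1\big)$ for $0<u\le1$. By hypothesis $W$ is independent of the SRF $\{S(s,t)\}$ and of the Skellam process $\{S(t)\}$.

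For the left-hand side I would condition on the pair $(E_1^{\alpha}(s),E_2^{\beta}(t))$. Using the independence built into (\ref{fsrf1}) and the pgf of the SRF given in (\ref{pgf1})--(\ref{srfpgf}) with the ``area'' $st$ replaced by the realized value $xy$, the tower property yields
\[
	\mathbb{E}u^{S^{\alpha,\beta}(s,t)}=\mathbb{E}\Big[\exp\big(E_1^{\alpha}(s)E_2^{\beta}(t)\,\psi(u)\big)\Big]=\mathbb{E}\exp\big(W\psi(u)\big),\ \ 0<u\le1.
\]
(Equivalently, one may condition on $(E_1^{\alpha}(s),E_2^{\beta}(t))$ directly in the difference form $N_1(E_1^{\alpha}(s),E_2^{\beta}(t))-N_2(E_1^{\alpha}(s),E_2^{\beta}(t))$ and use that, given $(x,y)$, the $N_i$ are independent Poisson with means $\lambda_i xy$.) For the right-hand side I would condition on $W$: by (\ref{sppgf}), given $W=w$ the variable $S(w)$ has pgf $\exp\big(\lambda_1 w(u-1)+\lambda_2 w(\tfrac1u-1)\big)=\exp(w\psi(u))$, so $\mathbb{E}u^{S(W)}=\mathbb{E}\exp(W\psi(u))$, which is exactly the expression just obtained. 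Hence the pgf's agree for all $0<u\le1$, and since both variables are $\mathbb{Z}$-valued this forces $S^{\alpha,\beta}(s,t)\overset{d}{=}S(E_1^{\alpha}(s)E_2^{\beta}(t))$.

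I do not expect a genuine obstacle; the only points needing care are the rigorous bookkeeping of the conditioning (Fubini / tower property), which is legitimate thanks to the mutual-independence hypotheses, and ensuring the generating-function identity is applied where it determines the law — automatic here, since the variables are integer-valued. If one prefers an unconditional justification, the same computation runs verbatim with the Fourier transform, i.e.\ replacing $u$ by $e^{i\xi}$ and $\psi(u)$ by $\lambda_1(e^{i\xi}-1)+\lambda_2(e^{-i\xi}-1)$, $\xi\in\mathbb{R}$, which characterizes the distribution outright.
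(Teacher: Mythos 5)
Your proposal is correct and follows essentially the same route as the paper: condition on $(E_1^{\alpha}(s),E_2^{\beta}(t))$, observe that the conditional generating function of each side equals $\exp(xy\,\psi(u))$ via (\ref{sppgf}) and (\ref{srfpgf}), and integrate back; the paper merely phrases this with $\mathbb{E}e^{-uS(\cdot)}$ instead of $\mathbb{E}u^{S(\cdot)}$. Your closing remark about passing to the characteristic function is a sensible (and slightly more careful) safeguard, since for $\mathbb{Z}$-valued variables the transform must be finite on the relevant range, but it does not change the argument.
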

\begin{proof}
	For $(s,t)\in\mathbb{R}^2_+$, on using (\ref{sppgf}), we have
	\begin{align*}
		\mathbb{E}\exp(&-uS(E_1^{\alpha}(s)E_2^{\beta}(t)))\\
		&=\int_{0}^{\infty}\int_{0}^{\infty}e^{-uS(xy)}\mathrm{Pr}\{E_1^\alpha(s)\in\mathrm{d}x\}\mathrm{Pr}\{E_2^\beta(t)\in\mathrm{d}y\}\\
		&=\int_{0}^{\infty}\int_{0}^{\infty}\exp\big(\lambda_1xy(e^{-u}-1)+\lambda_2xy\big(e^u-1\big)\big)\mathrm{Pr}\{E_1^\alpha(s)\in\mathrm{d}x\}\mathrm{Pr}\{E_2^\beta(t)\in\mathrm{d}y\}\\
		&=\int_{0}^{\infty}\int_{0}^{\infty}e^{-uS(x,y)}\mathrm{Pr}\{E_1^\alpha(s)\in\mathrm{d}x\}\mathrm{Pr}\{E_2^\beta(t)\in\mathrm{d}y\}\\
		&=\mathbb{E}\exp(-uS(E_1^\alpha(s),E_2^\beta(t))),
	\end{align*}
	where we have used (\ref{srfpgf}) to obtain the penultimate step.
	This completes the proof.
\end{proof}

The following result provides the governing equation for the pgf of FSRF-I.
\begin{proposition}
	The pgf $G^{\alpha,\beta}(u,s,t)=\mathbb{E}u^{S^{\alpha,\beta}(s,t)}$, $0<u\leq1$ of FSRF-I solves the following fractional differential equation:
	\begin{align*}
		\frac{\partial^{\alpha+\beta}}{\partial t^{\beta}\partial s^{\alpha}}G^{\alpha,\beta}(u,s,t)&=\bigg(\lambda_1(u-1)+\lambda_2\bigg(\frac{1}{u}-1\bigg)\bigg)G^{\alpha,\beta}(u,s,t)\\
		&\hspace{2.8cm} +\frac{(\lambda_1(u-1)u+\lambda_2(1-u))^2}{\lambda_1 u^2-\lambda_2}\frac{\partial}{\partial u}G^{\alpha,\beta}(u,s,t),\ \lambda_1 u^2-\lambda_2\ne0,
	\end{align*}
	with $G^{\alpha,\beta}(u,0,t)=G^{\alpha,\beta}(u,s,0)=1$.
\end{proposition}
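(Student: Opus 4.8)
The plan is to pass to Laplace transforms in the space variables and reduce the statement to the already-established governing equation (\ref{pgf2}) for the pgf of the ordinary SRF. Since in (\ref{fsrf1}) the inverse stable subordinators $\{E_1^{\alpha}(s)\}$, $\{E_2^{\beta}(t)\}$ and the SRF $\{S(s,t)\}$ are mutually independent, conditioning on $E_1^{\alpha}(s)$ and $E_2^{\beta}(t)$ gives
\[
	G^{\alpha,\beta}(u,s,t)=\int_{0}^{\infty}\!\int_{0}^{\infty}G(u,x,y)\,\mathrm{Pr}\{E_1^{\alpha}(s)\in\mathrm{d}x\}\,\mathrm{Pr}\{E_2^{\beta}(t)\in\mathrm{d}y\},
\]
where $G(u,x,y)$ is the pgf (\ref{srfpgf}) of the SRF. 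Taking the double Laplace transform $s\mapsto w_1$, $t\mapsto w_2$ (the interchange of integrals being legitimate once the relevant transforms are checked to converge) and using (\ref{islap}) in each variable yields the subordination identity
\[
	\widehat{G}^{\alpha,\beta}(u,w_1,w_2)=w_1^{\alpha-1}w_2^{\beta-1}\,\widehat{G}(u,w_1^{\alpha},w_2^{\beta}),
\]
where $\widehat{\,\cdot\,}$ denotes the double Laplace transform in $(s,t)$.

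Next I would Laplace-transform the governing equation (\ref{pgf2}). Using its initial conditions $G(u,0,t)=G(u,s,0)=1$ and the elementary rule for the first-order transforms, one obtains, for all $v_1,v_2>0$,
\[
	v_1 v_2\,\widehat{G}(u,v_1,v_2)-1=\bigg(\lambda_1(u-1)+\lambda_2\Big(\tfrac{1}{u}-1\Big)\bigg)\widehat{G}(u,v_1,v_2)+\frac{(\lambda_1(u-1)u+\lambda_2(1-u))^2}{\lambda_1 u^2-\lambda_2}\,\frac{\partial}{\partial u}\widehat{G}(u,v_1,v_2),
\]
the interchange of $\partial/\partial u$ with the transforms being harmless by the analytic dependence of $G$ on its arguments. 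Putting $v_1=w_1^{\alpha}$, $v_2=w_2^{\beta}$, multiplying through by $w_1^{\alpha-1}w_2^{\beta-1}$ and invoking the subordination identity turns this into
\[
	w_1^{\alpha}w_2^{\beta}\,\widehat{G}^{\alpha,\beta}(u,w_1,w_2)-w_1^{\alpha-1}w_2^{\beta-1}=\bigg(\lambda_1(u-1)+\lambda_2\Big(\tfrac{1}{u}-1\Big)\bigg)\widehat{G}^{\alpha,\beta}+\frac{(\lambda_1(u-1)u+\lambda_2(1-u))^2}{\lambda_1 u^2-\lambda_2}\,\frac{\partial}{\partial u}\widehat{G}^{\alpha,\beta}.
\]

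It remains to recognise the left-hand side as the Laplace transform of $\frac{\partial^{\alpha+\beta}}{\partial t^{\beta}\partial s^{\alpha}}G^{\alpha,\beta}$. Since $E_1^{\alpha}(0)=E_2^{\beta}(0)=0$ almost surely and $S$ vanishes on the coordinate axes, $S^{\alpha,\beta}(0,t)=S^{\alpha,\beta}(s,0)=0$ a.s., hence $G^{\alpha,\beta}(u,0,t)=G^{\alpha,\beta}(u,s,0)=1$; these are exactly the claimed initial conditions. Applying the Caputo Laplace rule $\mathcal{L}[\tfrac{\mathrm{d}^{\gamma}}{\mathrm{d}x^{\gamma}}h](w)=w^{\gamma}\widehat{h}(w)-w^{\gamma-1}h(0^{+})$ (for $0<\gamma<1$, the Caputo derivative being (\ref{cder})) iteratively in the two variables, and using that the Caputo derivative in $s$ of the constant boundary value $G^{\alpha,\beta}(u,s,0)\equiv 1$ vanishes, one finds that all boundary contributions collapse to precisely $-\,w_1^{\alpha-1}w_2^{\beta-1}$. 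Thus the left-hand side of the last display equals $\mathcal{L}_{s,t}\big[\tfrac{\partial^{\alpha+\beta}}{\partial t^{\beta}\partial s^{\alpha}}G^{\alpha,\beta}\big]$, while the right-hand side is the double Laplace transform of $\big(\lambda_1(u-1)+\lambda_2(\tfrac1u-1)\big)G^{\alpha,\beta}+\tfrac{(\lambda_1(u-1)u+\lambda_2(1-u))^2}{\lambda_1 u^2-\lambda_2}\tfrac{\partial}{\partial u}G^{\alpha,\beta}$, so inverting the Laplace transform gives the assertion.

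I expect the main obstacle to be the careful derivation and use of the Laplace-transform formula for the two-variable mixed Caputo derivative, together with the correct accounting of the boundary terms along $s=0$ and $t=0$ (this is precisely where the initial conditions enter). The remaining ingredients — the Fubini interchange, differentiation under the integral sign in $u$, convergence of the transforms involved, and the final inversion — are routine given the explicit (entire) form of $G$ and the transform (\ref{islap}) of the inverse stable subordinators.
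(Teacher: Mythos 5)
Your proposal is correct and follows essentially the same route as the paper's proof: the subordination representation of $G^{\alpha,\beta}$, the double Laplace transform together with (\ref{islap}) giving $\widehat{G}^{\alpha,\beta}(u,w,z)=w^{\alpha-1}z^{\beta-1}\widehat{G}(u,w^{\alpha},z^{\beta})$, the transformed version of (\ref{pgf2}) evaluated at $(w^{\alpha},z^{\beta})$, and inversion via the Caputo Laplace rule (\ref{frderlap}) with the boundary terms killed by $G^{\alpha,\beta}(u,0,t)=G^{\alpha,\beta}(u,s,0)=1$ and the vanishing of the Caputo derivative of a constant. The only cosmetic difference is that the paper inverts the two Laplace transforms one variable at a time, whereas you account for both boundary contributions at once.
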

\begin{proof}
	On using (\ref{fsrf1}), the pgf of FSRF-I is given by
	\begin{equation*}
		G^{\alpha,\beta}(u,s,t)=\int_{0}^{\infty}\int_{0}^{\infty}G(u,x,y)\mathrm{Pr}\{E_1^{\alpha}(s)\in\mathrm{d}x\}\mathrm{Pr}\{E_2^{\beta}(t)\in\mathrm{d}y\},
	\end{equation*}
	whose double Laplace transform is
	\begin{multline}\label{pf10}
		\int_{0}^{\infty}\int_{0}^{\infty}e^{-ws-zt}G^{\alpha,\beta}(u,s,t)\,\mathrm{d}s\,\mathrm{d}t\\=w^{\alpha-1}z^{\beta-1}\int_{0}^{\infty}\int_{0}^{\infty}G(u,x,y)e^{-w^{\alpha}x-z^{\beta}y}\,\mathrm{d}x\,\mathrm{d}y,\ w>0,\ z>0,
	\end{multline}
	where we have used (\ref{islap}). 
	
	Now, on taking the Laplace transform on both sides of (\ref{pgf2}) with respect to $t$, we get
	\begin{align}\label{pf11}
		z\frac{\partial}{\partial s}\int_{0}^{\infty}e^{-zt}G(u,s,t)\,\mathrm{d}t&=\bigg(\lambda_1(u-1)+\lambda_2\bigg(\frac{1}{u}-1\bigg)\bigg)\int_{0}^{\infty}e^{-zt}G(u,s,t)\,\mathrm{d}t\nonumber\\
		&\ \ +\frac{(\lambda_1(u-1)u+\lambda_2(1-u))^2}{\lambda_1 u^2-\lambda_2}\frac{\partial}{\partial u}\int_{0}^{\infty}e^{-zt}G(u,s,t)\,\mathrm{d}t,
	\end{align}
	where we have used $\frac{\partial}{\partial s}G(u,s,0)=0$. Here, interchange of derivative and integral is justified because the SRF have finite mean for all $(s,t)\in\mathbb{R}^2_+$. Again, taking the Laplace transform on both sides of (\ref{pf11}) with respect to $s$, we get
	\begin{align*}
		zw\int_{0}^{\infty}\int_{0}^{\infty}e^{-ws-zt}&G(u,s,t)\,\mathrm{d}s\,\mathrm{d}t-1\\
		&=\bigg(\lambda_1(u-1)+\lambda_2\bigg(\frac{1}{u}-1\bigg)\bigg)\int_{0}^{\infty}\int_{0}^{\infty}e^{-ws-zt}G(u,s,t)\,\mathrm{d}s\,\mathrm{d}t\nonumber\\
		&\ \ +\frac{(\lambda_1(u-1)u+\lambda_2(1-u))^2}{\lambda_1 u^2-\lambda_2}\frac{\partial}{\partial u}\int_{0}^{\infty}\int_{0}^{\infty}e^{-ws-zt}G(u,s,t)\,\mathrm{d}s\,\mathrm{d}t,
	\end{align*}
	where we have used that $\int_{0}^{\infty}e^{-zt}G(u,0,t)\,\mathrm{d}t=1/z$.
	So,
	\begin{align}
		z^{\beta}z^{\beta-1}w^{\alpha}&w^{\alpha-1}\int_{0}^{\infty}\int_{0}^{\infty}e^{-w^{\alpha}s-z^{\beta}t}G(u,s,t)\,\mathrm{d}s\,\mathrm{d}t-z^{\beta-1}w^{\alpha-1}\nonumber\\
		&=z^{\beta-1}w^{\beta-1}\bigg(\lambda_1(u-1)+\lambda_2\bigg(\frac{1}{u}-1\bigg)\bigg)\int_{0}^{\infty}\int_{0}^{\infty}e^{-w^{\beta}s-z^{\alpha}t}G(u,s,t)\,\mathrm{d}s\,\mathrm{d}t\nonumber\\
		&\ \ +z^{\beta-1}w^{\beta-1}\frac{(\lambda_1(u-1)u+\lambda_2(1-u))^2}{\lambda_1 u^2-\lambda_2}\frac{\partial}{\partial u}\int_{0}^{\infty}\int_{0}^{\infty}e^{-w^{\beta}s-z^{\alpha}t}G(u,s,t)\,\mathrm{d}s\,\mathrm{d}t.\label{pf12}
	\end{align}
	Now, on substituting (\ref{pf10}) in (\ref{pf12}), we have
	\begin{align}
		z^{\beta}w^{\alpha}\int_{0}^{\infty}&\int_{0}^{\infty}e^{-ws-zt}G^{\alpha,\beta}(u,s,t)\,\mathrm{d}s\,\mathrm{d}t-z^{\beta}w^{\alpha-1}\int_{0}^{\infty}e^{-zt}G^{\alpha,\beta}(u,0,t)\,\mathrm{d}t\nonumber\\
		&=\bigg(\lambda_1(u-1)+\lambda_2\bigg(\frac{1}{u}-1\bigg)\bigg)\int_{0}^{\infty}\int_{0}^{\infty}e^{-ws-zt}G^{\alpha,\beta}(u,s,t)\,\mathrm{d}s\,\mathrm{d}t\nonumber\\
		&\ \ +\frac{(\lambda_1(u-1)u+\lambda_2(1-u))^2}{\lambda_1 u^2-\lambda_2}\frac{\partial}{\partial u}\int_{0}^{\infty}\int_{0}^{\infty}e^{-ws-zt}G^{\alpha,\beta}(u,s,t)\,\mathrm{d}s\,\mathrm{d}t,\label{pf13}
	\end{align}
	where we have used that $\int_{0}^{\infty}e^{-zt}G^{\alpha,\beta}(u,0,t)\,\mathrm{d}t=1/z$.
	On taking the inverse Laplace transform on both sides of (\ref{pf13}) with respect to $s$ and using the following result (see \cite{Kilbas2006}):
	\begin{equation}\label{frderlap}
		\int_{0}^{\infty}e^{-wt}\frac{\mathrm{d}^\alpha}{\mathrm{d}t^\alpha}f(t)\,\mathrm{d}t=w^\alpha\int_{0}^{\infty}e^{-wt}f(t)\,\mathrm{d}t-w^{\alpha-1}f(0^+),\ w>0,\ \alpha>0,
	\end{equation}
	we get
	\begin{align}
		z^{\beta}\frac{\partial^{\alpha}}{\partial s^{\alpha}}\int_{0}^{\infty}e^{-zt}G^{\alpha,\beta}(u,s,t)\,\mathrm{d}t
		&=\bigg(\lambda_1(u-1)+\lambda_2\bigg(\frac{1}{u}-1\bigg)\bigg)\int_{0}^{\infty}e^{-zt}G^{\alpha,\beta}(u,s,t)\,\mathrm{d}t\nonumber\\
		&\ \ +\frac{(\lambda_1(u-1)u+\lambda_2(1-u))^2}{\lambda_1 u^2-\lambda_2}\frac{\partial}{\partial u}\int_{0}^{\infty}e^{-zt}G^{\alpha,\beta}(u,s,t)\,\mathrm{d}t,\label{pf14}
	\end{align}
	which again on taking inverse Laplace transform with respect to $z$, and using $\frac{\partial^{\alpha}}{\partial s^{\alpha}}G^{\alpha,\beta}(u,s,0)=0$ and (\ref{frderlap}) yields the required result. 
\end{proof}
\begin{theorem}
	The distribution $p^{\alpha,\beta}(n,s,t)=\mathrm{Pr}\{S^{\alpha,\beta}(s,t)=n\}$, $n\in\mathbb{Z}$ of FSRF-I reads
	\begin{align}
		p^{\alpha,\beta}(n,s,t)&=\bigg(\frac{\lambda_1}{\lambda_2}\bigg)^{n/2}\sum_{k=0}^{\infty}\frac{(\sqrt{\lambda_1\lambda_2}s^\alpha t^\beta)^{|n|+2k}}{(|n|+k)!k!}\nonumber\\
		&\hspace{1.5cm}\cdot{}_2\Psi_2\left[\begin{matrix}
			(|n|+2k+1,1)&(|n|+2k+1,1)\\\\
			((|n|+2k)\alpha+1,\alpha)&((|n|+2k)\beta+1,\beta)
		\end{matrix}\Bigg| -(\lambda_1+\lambda_2)s^{\alpha}t^{\beta} \right],\label{fsrf1dist}
	\end{align}
	where ${}_2\Psi_2$ is the generalized Wright function defined as follows (see \cite{Kilbas2006}):
	\begin{equation}\label{genwrit}
		{}_m\Psi_l\left[\begin{matrix}
			(a_1,\alpha_1),\,(a_2,\alpha_2),\dots,(a_m,\alpha_m)\\\\
			(b_1,\beta_1),\,(b_2,\beta_2),\dots,(b_l,\beta_l)
		\end{matrix}\Bigg| x \right]=\sum_{n=0}^{\infty}\frac{\prod_{i=1}^{m}\Gamma(a_i+n\alpha_i)x^n}{\prod_{j=1}^{l}\Gamma(b_j+n\beta_j)n!},\ x\in\mathbb{R},
	\end{equation}
	for $a_i,b_j\in\mathbb{R}$ and $\alpha_i,\beta_j\in\mathbb{R}-\{0\}$, $i=1,2,\dots,m$, $j=1,2,\dots,l$.
\end{theorem}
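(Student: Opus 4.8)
The plan is to bypass integrating directly against the laws of the two inverse stable subordinators and instead use a subordinated compound representation. Since $N(E_1^{\alpha}(s),E_2^{\beta}(t))\overset{d}{=}N^{\alpha,\beta}(s,t)$, a FPRF of parameter $\lambda_1+\lambda_2$ whose distribution $q^{\alpha,\beta}$ is given by (\ref{fprfdist}) (with $\lambda=\lambda_1+\lambda_2$), and since the remark immediately following the definition of the FSRF-I gives $S^{\alpha,\beta}(s,t)\overset{d}{=}\sum_{j=1}^{N(E_1^{\alpha}(s),E_2^{\beta}(t))}Y_j$ with the $Y_j$'s of Remark~\ref{rem3.4} independent of everything else, we obtain $S^{\alpha,\beta}(s,t)\overset{d}{=}\sum_{j=1}^{N^{\alpha,\beta}(s,t)}Y_j$. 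Conditioning on $N^{\alpha,\beta}(s,t)$ and using its independence from the $Y_j$'s,
\[
	p^{\alpha,\beta}(n,s,t)=\sum_{r=0}^{\infty}q^{\alpha,\beta}(r,s,t)\,\mathrm{Pr}\{Y_1+\cdots+Y_r=n\},
\]
the rearrangement being legitimate because the series is dominated by $\sum_{r\ge0}q^{\alpha,\beta}(r,s,t)=1$.

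Next I would evaluate the displaced random walk probability. With $p_0=\lambda_1/(\lambda_1+\lambda_2)$ and $q_0=\lambda_2/(\lambda_1+\lambda_2)$, the event $\{Y_1+\cdots+Y_r=n\}$ is empty unless $r\ge|n|$ and $r\equiv n\pmod 2$, and for $r=|n|+2k$, $k\ge0$, it requires exactly $(r+n)/2$ of the $Y_j$'s to equal $+1$, so its probability is $\binom{|n|+2k}{k}p_0^{(r+n)/2}q_0^{(r-n)/2}=\binom{|n|+2k}{k}\big(\lambda_1\lambda_2/(\lambda_1+\lambda_2)^2\big)^{(|n|+2k)/2}(\lambda_1/\lambda_2)^{n/2}$, where I use $\lambda_1^{(r+n)/2}\lambda_2^{(r-n)/2}=(\lambda_1\lambda_2)^{r/2}(\lambda_1/\lambda_2)^{n/2}$, valid for both signs of $n$. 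Thus only the indices $r=|n|+2k$ survive in the sum. In parallel I would recast $q^{\alpha,\beta}(r,s,t)$: substituting $j=r+m$ in (\ref{fprfdist}) and using $(r+m)_{(m)}(r+m)_{(r)}=\big((r+m)!\big)^2/(r!\,m!)$ gives
\[
	q^{\alpha,\beta}(r,s,t)=\sum_{m=0}^{\infty}\frac{(-1)^m\big((r+m)!\big)^2\big((\lambda_1+\lambda_2)s^{\alpha}t^{\beta}\big)^{r+m}}{r!\,m!\,\Gamma((r+m)\alpha+1)\Gamma((r+m)\beta+1)}.
\]

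The final step is assembly. For $r=|n|+2k$, multiplying the two quantities, the $(|n|+2k)!$ in $\binom{|n|+2k}{k}$ cancels the $r!$ in the denominator of $q^{\alpha,\beta}(r,s,t)$ and leaves $1/(k!\,(|n|+k)!)$, while $\big(\lambda_1\lambda_2/(\lambda_1+\lambda_2)^2\big)^{(|n|+2k)/2}\big((\lambda_1+\lambda_2)s^{\alpha}t^{\beta}\big)^{|n|+2k+m}=(\lambda_1\lambda_2)^{(|n|+2k)/2}(s^{\alpha}t^{\beta})^{|n|+2k}\big((\lambda_1+\lambda_2)s^{\alpha}t^{\beta}\big)^{m}$. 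Summing over $k$ therefore produces the outer factor $(\lambda_1/\lambda_2)^{n/2}\big(\sqrt{\lambda_1\lambda_2}\,s^{\alpha}t^{\beta}\big)^{|n|+2k}/(k!\,(|n|+k)!)$ and an inner sum over $m$; since $\big((|n|+2k+m)!\big)^2=\Gamma\big((|n|+2k+1)+m\big)^2$ and the two $\Gamma$-factors in the denominator are $\Gamma\big((|n|+2k)\alpha+1+m\alpha\big)$ and $\Gamma\big((|n|+2k)\beta+1+m\beta\big)$, that inner sum is exactly the ${}_2\Psi_2$ of (\ref{genwrit}) with parameters $(|n|+2k+1,1)$, $(|n|+2k+1,1)$ in the numerator and $((|n|+2k)\alpha+1,\alpha)$, $((|n|+2k)\beta+1,\beta)$ in the denominator, evaluated at $-(\lambda_1+\lambda_2)s^{\alpha}t^{\beta}$. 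This gives (\ref{fsrf1dist}).

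I do not expect a genuine obstacle; the only delicate part is the bookkeeping — the falling-factorial identity $(r+m)_{(m)}(r+m)_{(r)}=((r+m)!)^2/(r!\,m!)$, handling the parity and displacement of the walk uniformly in $\mathrm{sgn}(n)$, and recognizing the resulting $m$-series as a ${}_2\Psi_2$. An alternative, more computational route conditions directly on $(E_1^{\alpha}(s),E_2^{\beta}(t))$, uses that the SRF point probability $p(n,x,y)$ depends on $(x,y)$ only through the product $xy$, expands $e^{-(\lambda_1+\lambda_2)xy}I_{|n|}(2\sqrt{\lambda_1\lambda_2}xy)$ as a double power series in $xy$, and integrates term by term via the moment formula $\mathbb{E}\big(E^{\alpha}(s)\big)^p=p!\,s^{p\alpha}/\Gamma(p\alpha+1)$ for the inverse stable subordinator; this leads to the same double series after reindexing, the term-by-term integration being justified by absolute convergence.
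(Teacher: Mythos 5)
Your proof is correct, but it takes a genuinely different route from the paper's. The paper conditions on $(E_1^{\alpha}(s),E_2^{\beta}(t))$, inserts the Bessel-series form of the SRF law into the integral (\ref{depf1}), passes to a double Laplace transform in $(s,t)$ via (\ref{islap}), inverts in the first transform variable using the three-parameter Mittag--Leffler pair (\ref{tmllap}), integrates term by term in $y$, and inverts again --- the ${}_2\Psi_2$ structure emerges from this two-stage Laplace inversion. You instead combine the compound representation $S^{\alpha,\beta}(s,t)\overset{d}{=}\sum_{j=1}^{N(E_1^{\alpha}(s),E_2^{\beta}(t))}Y_j$ with the identity $N(E_1^{\alpha}(s),E_2^{\beta}(t))\overset{d}{=}N^{\alpha,\beta}(s,t)$, reduce everything to the elementary lattice-walk count $\mathrm{Pr}\{Y_1+\cdots+Y_r=n\}=\binom{|n|+2k}{k}p_0^{(r+n)/2}q_0^{(r-n)/2}$ for $r=|n|+2k$, and then substitute the known FPRF law (\ref{fprfdist}); your bookkeeping (the identity $(r+m)_{(m)}(r+m)_{(r)}=((r+m)!)^2/(r!\,m!)$, the cancellation of $(|n|+2k)!$ against $1/r!$, and the uniform treatment of $\mathrm{sgn}(n)$ via $\lambda_1^{(r+n)/2}\lambda_2^{(r-n)/2}=(\lambda_1\lambda_2)^{r/2}(\lambda_1/\lambda_2)^{n/2}$) all checks out and reproduces (\ref{fsrf1dist}) exactly. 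The trade-off: your argument is shorter, transform-free, and makes transparent that the FSRF-I law is a tilted lattice resummation of FPRF point probabilities, but it outsources the analytic content to two imported facts from the preliminaries --- formula (\ref{fprfdist}) and the distributional identity for $N^{\alpha,\beta}$ --- whereas the paper's computation is self-contained modulo standard Laplace-transform identities. (Your sketched ``alternative, more computational route'' via moments of the inverse stable subordinator is essentially the paper's proof in disguise.)
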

\begin{proof}
	From (\ref{fsrf1}), the distribution of FSRF-I is given by
	\begin{align}
		p^{\alpha,\beta}&(n,s,t)\nonumber\\
		&=\int_{0}^{\infty}\int_{0}^{\infty}p(n,x,y)\mathrm{Pr}\{E_1^{\alpha}(s)\in\mathrm{d}x\}\mathrm{Pr}\{E_2^{\beta}(t)\in\mathrm{d}y\}\label{depf1}\\
		&=\bigg(\frac{\lambda_1}{\lambda_2}\bigg)^{n/2}\sum_{k=0}^{\infty}\frac{(\sqrt{\lambda_1\lambda_2})^{|n|+2k}}{(|n|+k)!k!}\int_{0}^{\infty}\int_{0}^{\infty}e^{-(\lambda_1+\lambda_2)xy}(xy)^{|n|+2k}\mathrm{Pr}\{E_1^{\alpha}(s)\in\mathrm{d}x\}\mathrm{Pr}\{E_2^{\beta}(t)\in\mathrm{d}y\}.\nonumber
	\end{align}
	On using (\ref{islap}) and Fubini's theorem, its double Laplace transform is 
	\begin{align}
		\int_{0}^{\infty}\int_{0}^{\infty}&e^{-sw-zt}p^{\alpha,\beta}(n,s,t)\,\mathrm{d}s\,\mathrm{d}t\nonumber\\
		&=\bigg(\frac{\lambda_1}{\lambda_2}\bigg)^{n/2}\sum_{k=0}^{\infty}\frac{(\sqrt{\lambda_1\lambda_2})^{|n|+2k}}{(|n|+k)!k!}z^{\beta-1}w^{\alpha-1}\int_{0}^{\infty}\int_{0}^{\infty}e^{-(\lambda_1+\lambda_2)xy}(xy)^{|n|+2k}e^{-z^{\beta}y-w^{\alpha}x}\,\mathrm{d}x\,\mathrm{d}y\nonumber\\
		&=\bigg(\frac{\lambda_1}{\lambda_2}\bigg)^{n/2}\sum_{k=0}^{\infty}\frac{(\sqrt{\lambda_1\lambda_2})^{|n|+2k}}{(|n|+k)!k!}z^{\beta-1}\int_{0}^{\infty}y^{|n|+2k}e^{-z^{\beta}y}\frac{\Gamma(|n|+2k+1)w^{\alpha-1}}{(w^{\alpha}+(\lambda_1+\lambda_2)y)^{|n|+2k+1}}\,\mathrm{d}y.\label{pf21}
	\end{align}
	For $\alpha>0$, $\beta\in\mathbb{R}$ and $\gamma\in\mathbb{R}$, we have the following Laplace transform (see \cite{Kilbas2006}):
	\begin{equation}\label{tmllap}
		\int_{0}^{\infty}e^{-wt}t^{\beta-1}E_{\alpha,\beta}^\gamma(ct^\alpha)\,\mathrm{d}t=\frac{w^{\alpha\gamma-\beta}}{(w^\alpha-c)^\gamma},\ |cw^{-\alpha}|<1,\ w>0,
	\end{equation}
	where $E_{\alpha,\beta}^\gamma(\cdot)$ is the three parameter Mittag-Leffler function defined as follows:
	\begin{equation}\label{tml}
		E_{\alpha,\beta}^\gamma(x)=\sum_{r=0}^{\infty}\frac{(\gamma)_rx^k}{\Gamma(\alpha r+\beta)r!},\ x\in\mathbb{R}.
	\end{equation}
	Here, $(\gamma)_r=\gamma(\gamma+1)\dots(\gamma+r-1)$. 
	
	On taking the inverse Laplace transform on both sides of (\ref{pf21}) with respect to $w$ and using (\ref{tmllap}), we get
	{\scriptsize\begin{align*}
		&\int_{0}^{\infty}e^{-zt}p^{\alpha,\beta}(n,s,t)\,\mathrm{d}t\\
		&=\bigg(\frac{\lambda_1}{\lambda_2}\bigg)^{n/2}\sum_{k=0}^{\infty}\frac{(\sqrt{\lambda_1\lambda_2})^{|n|+2k}\Gamma(|n|+2k+1)}{(|n|+k)!k!}z^{\beta-1}s^{(|n|+2k)\alpha}\int_{0}^{\infty}y^{|n|+2k}e^{-z^{\beta}y}E_{\alpha,(|n|+2k)\alpha+1}^{|n|+2k+1}(-(\lambda_1+\lambda_2)ys^{\alpha})\,\mathrm{d}y\\
		&=\bigg(\frac{\lambda_1}{\lambda_2}\bigg)^{n/2}\sum_{k=0}^{\infty}\frac{(\sqrt{\lambda_1\lambda_2})^{|n|+2k}\Gamma(|n|+2k+1)}{(|n|+k)!k!}z^{\beta-1}s^{(|n|+2k)\alpha}\sum_{r=0}^{\infty}\frac{(|n|+2k+1)_r(-(\lambda_1+\lambda_2)s^{\alpha})^r}{\Gamma(r\alpha+(|n|+2k)\alpha+1)r!}\int_{0}^{\infty}y^{|n|+2k+r}e^{-z^{\beta}y}\,\mathrm{d}y\\
		&=\bigg(\frac{\lambda_1}{\lambda_2}\bigg)^{n/2}\sum_{k=0}^{\infty}\frac{(\sqrt{\lambda_1\lambda_2})^{|n|+2k}\Gamma(|n|+2k+1)}{(|n|+k)!k!}s^{(|n|+2k)\alpha}\sum_{r=0}^{\infty}\frac{(|n|+2k+1)_r(-(\lambda_1+\lambda_2)s^{\alpha})^r}{\Gamma(r\alpha+(|n|+2k)\alpha+1)r!}\frac{\Gamma(|n|+2k+r+1)}{z^{\beta(|n|+2k+r)+1}}.
	\end{align*}}
	Its inverse Laplace transform reads
	{\scriptsize\begin{align*}
		p^{\alpha,\beta}&(n,s,t)\\
		&=\bigg(\frac{\lambda_1}{\lambda_2}\bigg)^{n/2}\sum_{k=0}^{\infty}\frac{(\sqrt{\lambda_1\lambda_2})^{|n|+2k}}{(|n|+k)!k!}s^{(|n|+2k)\alpha}\sum_{r=0}^{\infty}\frac{\Gamma(|n|+2k+1+r)(-(\lambda_1+\lambda_2)s^{\alpha})^r}{\Gamma(r\alpha+(|n|+2k)\alpha+1)r!}\frac{\Gamma(|n|+2k+r+1)t^{\beta(|n|+2k+r)}}{\Gamma(\beta(|n|+2k+r)+1)}\\
		&=\bigg(\frac{\lambda_1}{\lambda_2}\bigg)^{n/2}\sum_{k=0}^{\infty}\frac{(\sqrt{\lambda_1\lambda_2})^{|n|+2k}}{(|n|+k)!k!}(s^\alpha t^\beta)^{|n|+2k}\sum_{r=0}^{\infty}\frac{\Gamma(|n|+2k+1+r)\Gamma(|n|+2k+1+r)(-(\lambda_1+\lambda_2)s^{\alpha}t^{\beta})^r}{\Gamma((|n|+2k)\alpha+1+r\alpha)\Gamma(\beta(|n|+2k)+1+r\beta)r!}.
	\end{align*}}
	This gives the required result.
\end{proof}
\begin{remark}
	If we allow $\alpha=\beta=1$ in (\ref{fsrf1dist}) then it reduces to 
	\begin{align*}
		p^{1,1}(n,s,t)&=\bigg(\frac{\lambda_1}{\lambda_2}\bigg)^{n/2}\sum_{k=0}^{\infty}\frac{(\sqrt{\lambda_1\lambda_2})^{|n|+2k}}{(|n|+k)!k!}(st)^{|n|+2k}e^{-(\lambda_1+\lambda_2)st}\\
		&=e^{-(\lambda_1+\lambda_2)st}\bigg(\frac{\lambda_1}{\lambda_2}\bigg)^{n/2}I_{|n|}(2\sqrt{\lambda_1\lambda_2}st),\ n\in\mathbb{Z},
	\end{align*}
	which coincides with (\ref{srfdist}).
\end{remark}
	From (\ref{fsrf1}), the mean of FSRF-I is given by
	\begin{equation*}
		\mathbb{E}S^{\alpha,\beta}(s,t)=\int_{0}^{\infty}\int_{0}^{\infty}\mathbb{E}S(x,y)\mathrm{Pr}\{E_1^{\alpha}(s)\in\mathrm{d}x,E_2^{\beta}(t)\in\mathrm{d}y\},
	\end{equation*}
	whose double Laplace transform is
	\begin{align*}
		\int_{0}^{\infty}\int_{0}^{\infty}e^{-sw-zt}\mathbb{E}S^{\alpha,\beta}(s,t)\,\mathrm{d}s\,\mathrm{d}t&=(\lambda_1-\lambda_2)w^{\alpha-1}z^{\beta-1}\int_{0}^{\infty}\int_{0}^{\infty}xye^{-w^{\alpha}x}e^{-z^{\beta}y}\,\mathrm{d}x\,\mathrm{d}y\\
		&=\frac{(\lambda_1-\lambda_2)}{w^{\alpha+1}z^{\beta+1}}.
	\end{align*}
	Its double inversion yields 
	\begin{equation}\label{meanfsrf1}
		\mathbb{E}S^{\alpha,\beta}(s,t)=\frac{(\lambda_1-\lambda_2)s^{\alpha}t^{\beta}}{\Gamma(\alpha+1)\Gamma(\beta+1)},\ (s,t)\in\mathbb{R}^2_+.
	\end{equation}
	
	Similarly, the double Laplace transform of the second moment of FSRF-I is given by
	\begin{align*}
		\int_{0}^{\infty}\int_{0}^{\infty}e^{-sw-zt}&\mathbb{E}(S^{\alpha,\beta}(s,t))^2\,\mathrm{d}s\,\mathrm{d}t\\
		&=w^{\alpha-1}z^{\beta-1}\int_{0}^{\infty}\int_{0}^{\infty}((\lambda_1+\lambda_2)xy+(\lambda_1-\lambda_2)^2x^2y^2)e^{-w^\alpha x}e^{-z^\beta y}\,\mathrm{d}x\,\mathrm{d}y\\
		&=\frac{(\lambda_1+\lambda_2)}{w^{\alpha+1}z^{\beta+1}}+\frac{4(\lambda_1-\lambda_2)^2}{w^{2\alpha+1}z^{2\beta+1}},
	\end{align*}
	whose inverse Laplace transform yields
	\begin{equation*}
		\mathbb{E}(S^{\alpha,\beta}(s,t))^2=\frac{(\lambda_1+\lambda_2)s^\alpha t^\beta}{\Gamma(\alpha+1)\Gamma(\beta+1)}+\frac{4(\lambda_1-\lambda_2)^2s^{2\alpha}t^{2\beta}}{\Gamma(2\alpha+1)\Gamma(2\beta+1)}.
	\end{equation*}
	Thus, its variance is given by
	\begin{equation}\label{vfsrf1}
		\mathbb{V}\mathrm{ar}S^{\alpha,\beta}(s,t)=\frac{(\lambda_1+\lambda_2)s^\alpha t^\beta}{\Gamma(\alpha+1)\Gamma(\beta+1)}+\frac{4(\lambda_1-\lambda_2)^2s^{2\alpha}t^{2\beta}}{\Gamma(2\alpha+1)\Gamma(2\beta+1)}-\frac{(\lambda_1-\lambda_2)^2s^{2\alpha}t^{2\beta}}{\Gamma^2(\alpha+1)\Gamma^2(\beta+1)},\ (s,t)\in\mathbb{R}^2_+.
	\end{equation}
\begin{remark}
	The characteristic function of SRF is given by 
	\begin{equation*}
		\mathbb{E}e^{i\xi S(s,t)}=\exp\big(\lambda_1st(e^{i\xi}-1)+\lambda_2st(e^{-i\xi}-1)\big),\ \xi\in\mathbb{R}.
	\end{equation*}
	By taking $\psi(\xi)=\lambda_1(1-e^{i\xi})+\lambda_2(1-e^{-i\xi})$, we have $\mathbb{E}e^{i\xi S(s,t)}=e^{-st\psi(\xi)}$, $\xi\in\mathbb{R}$. As the SRF has independent and stationary rectangular increments, we can use the result of \cite{Vishwakarma2025} to derive the mean, variance and auto covariance of the FSRF-I. From Theorem 2.2 of \cite{Vishwakarma2025}, we have 
	\begin{equation*}
		\mathbb{E}S^{\alpha,\beta}(s,t)=\mathbb{E}S(1,1)\mathbb{E}E_1^{\alpha}(s)\mathbb{E}E_2^\beta(t)=\frac{(\lambda_1-\lambda_2)s^\alpha t^\beta}{\Gamma(\alpha+1)\Gamma(\beta+1)},\ (s,t)\in\mathbb{R}^2_+,
	\end{equation*}
	where we have used (\ref{ismeam}). Also, we have
	\begin{equation*}
		\mathbb{V}\mathrm{ar}S^{\alpha,\beta}(s,t)=\mathbb{V}\mathrm{ar}S(1,1)\mathbb{E}E_1^\alpha(s)\mathbb{E}E_2^\beta+(\mathbb{E}S(1,1))^2\mathbb{V}\mathrm{ar}E_1^{\alpha}(s)E_2^\beta(t).
	\end{equation*}
	From Example 2.3 of \cite{Vishwakarma2025}, we get
	\begin{equation*}
		\mathbb{V}\mathrm{ar}E_1^{\alpha}(s)E_2^\beta(t)=s^{2\alpha}t^{2\beta}\bigg(\frac{4}{\Gamma(2\alpha+1)\Gamma(2\beta+1)}-\frac{1}{\Gamma^2(\alpha+1)\Gamma^2(\beta+1)}\bigg).
	\end{equation*}
	Thus, 
	\begin{equation*}
		\mathbb{V}\mathrm{ar}S^{\alpha,\beta}(s,t)=\frac{(\lambda_1+\lambda_2)s^\alpha t^\beta}{\Gamma(\alpha+1)\Gamma(\beta+1)}+(\lambda_1-\lambda_2)^2s^{2\alpha}t^{2\beta}\bigg(\frac{4}{\Gamma(2\alpha+1)\Gamma(2\beta+1)}-\frac{1}{\Gamma^2(\alpha+1)\Gamma^2(\beta+1)}\bigg).
	\end{equation*}
	These results coincides with results obtained in (\ref{meanfsrf1}) and (\ref{vfsrf1}).
	
	Further, for $(s,t)\preceq(s',t')$, the auto covariance of FSRF-I is given as follows:
	\begin{equation*}
		\mathbb{C}\mathrm{ov}(S^{\alpha,\beta}(s,t),S^{\alpha,\beta}(s',t'))=\mathbb{V}\mathrm{ar}S(1,1)\mathbb{E}E_1^\alpha(s)\mathbb{E}E_2^\beta(t)+(\mathbb{E}S(1,1))^2\mathbb{C}\mathrm{ov}(E_1^\alpha(s)E_2^\beta(t),E_1^\alpha(s')E_2^\beta(t')).
	\end{equation*}
	From Example 2.3 of \cite{Vishwakarma2025}, we have
	\begin{align*}
		\mathbb{C}\mathrm{ov}(E_1^\alpha(s)E_2^\beta(t),E_1^\alpha(s')E_2^\beta(t'))&=\frac{1}{\Gamma(\alpha+1)\Gamma(\alpha)}\int_{0}^{s}((s'-x)^\alpha+(s-x)^\alpha)x^{\alpha-1}\,\mathrm{d}x\\
		&\ \ \cdot \frac{1}{\Gamma(\beta+1)\Gamma(\beta)}\int_{0}^{t}((t'-y)^{\beta}+(t-y)^{\beta})y^{\beta-1}\,\mathrm{d}y\\
		&\ \ -\frac{(ss')^\alpha(tt')^\beta}{\Gamma^2(\alpha+1)\Gamma^2(\beta+1)},\ (s,t)\preceq(s',t').
	\end{align*}
	Thus,
	\begin{align*}
		\mathbb{C}\mathrm{ov}(S^{\alpha,\beta}(s,t),S^{\alpha,\beta}(s',t'))&=(\lambda_1-\lambda_2)^2\bigg(\frac{1}{\Gamma(\alpha+1)\Gamma(\alpha)}\int_{0}^{s}((s'-x)^\alpha+(s-x)^\alpha)x^{\alpha-1}\,\mathrm{d}x\\
		&\ \ \cdot \frac{1}{\Gamma(\beta+1)\Gamma(\beta)}\int_{0}^{t}((t'-y)^{\beta}+(t-y)^{\beta})y^{\beta-1}\,\mathrm{d}y\\
		&\ \ -\frac{(ss')^\alpha(tt')^\beta}{\Gamma^2(\alpha+1)\Gamma^2(\beta+1)}\bigg)+\frac{(\lambda_1+\lambda_2)s^\alpha t^\beta}{\Gamma(\alpha+1)\Gamma(\beta+1)},\ (s,t)\preceq(s',t').
	\end{align*}
\end{remark}

\subsection{Fractional Skellam random field-II} 
We now analyze the second variant of the fractional SRF, in which the time fractionality is introduced only in one component of the indexing parameter. Let $\{E^\alpha(s),\ s\ge0\}$ be an inverse $\alpha$-stable subordinator that is independent of the SRF $\{S(s,t),\ (s,t)\in\mathbb{R}^2_+\}$. We consider a random field $\{S^\alpha(s,t),\ (s,t)\in\mathbb{R}^2_+\}$ defined as follows:
\begin{equation}\label{fsrf2}
	S^\alpha(s,t)\coloneqq S(E^\alpha(s),t).
\end{equation}
We call it the fractional Skellam random field of type two (FSRF-II). Similar to (\ref{fsrf2}), we can also replace the second time component of SRF with an independent inverse stable subordinator. 
\begin{theorem}
	The distribution $p^\alpha(n,s,t)=\mathrm{Pr}\{S^\alpha(s,t)=n\}$, $n\in\mathbb{Z}$ of FSRF-II is given by
	\begin{equation*}
	p^{\alpha}(n,s,t)=\bigg(\frac{\lambda_1}{\lambda_2}\bigg)^{n/2}\sum_{k=0}^{\infty}\frac{(|n|+2k)!(\sqrt{\lambda_1\lambda_2}s^\alpha t)^{|n|+2k}}{(|n|+k)!k!}E_{\alpha,\alpha(|n|+2k)}^{|n|+2k+1}(-(\lambda_1+\lambda_2)s^\alpha t),\ n\in\mathbb{Z}.
	\end{equation*}
	
	Moreover, it solves the following system of fractional differential equations:
	\begin{equation*}
		\frac{\partial^\alpha}{\partial s^\alpha}p^{\alpha}(n,s,t)=-(\lambda_1+\lambda_2)tp^{\alpha}(n,s,t)+\lambda_1 tp^{\alpha}(n-1,s,t)+\lambda_2 tp^{\alpha}(n+1,s,t),\ n\in\mathbb{Z},
	\end{equation*}
	with initial condition $p^{\alpha}(0,0,t)=1$ for all $t\ge0$,
\end{theorem}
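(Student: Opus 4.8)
The plan is to use the subordination representation \eqref{fsrf2}, namely $S^\alpha(s,t) = S(E^\alpha(s),t)$, and push the known distribution \eqref{srfdist} of the SRF through the inverse stable subordinator. Concretely, writing
\[
p^\alpha(n,s,t) = \int_0^\infty p(n,x,t)\,\mathrm{Pr}\{E^\alpha(s)\in\mathrm{d}x\},
\]
I would substitute the Bessel-series form $p(n,x,t) = (\lambda_1/\lambda_2)^{n/2}\sum_{k\ge0}\frac{(\sqrt{\lambda_1\lambda_2}xt)^{|n|+2k}}{(|n|+k)!k!}e^{-(\lambda_1+\lambda_2)xt}$ and interchange sum and integral by Fubini (justified since all terms are nonnegative). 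This leaves the task of evaluating $\int_0^\infty x^{|n|+2k}e^{-(\lambda_1+\lambda_2)xt}\,\mathrm{Pr}\{E^\alpha(s)\in\mathrm{d}x\}$. Taking the Laplace transform in $s$ and using \eqref{islap}, this becomes $w^{\alpha-1}\int_0^\infty x^{|n|+2k}e^{-(\lambda_1+\lambda_2)xt - w^\alpha x}\,\mathrm{d}x = w^{\alpha-1}\frac{(|n|+2k)!}{(w^\alpha+(\lambda_1+\lambda_2)t)^{|n|+2k+1}}$; then I would invert using the three-parameter Mittag-Leffler Laplace transform \eqref{tmllap} with $\gamma = |n|+2k+1$ and $\beta = \alpha(|n|+2k)$ (the exponent bookkeeping: $w^{\alpha\gamma-\beta} = w^{\alpha(|n|+2k+1)-\alpha(|n|+2k)} = w^\alpha$, which needs one extra factor of $w^{-1}$ to match $w^{\alpha-1}$, and indeed $s^{\beta-1}E^\gamma_{\alpha,\beta}$ picks that up correctly after reconciling with the $s^{\alpha(|n|+2k)}$ prefactor). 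Reassembling the series gives the stated closed form, and setting $\alpha=1$ should collapse $E^{|n|+2k+1}_{1,|n|+2k}(-\cdot\,)$ back to the exponential, recovering \eqref{srfdist} as a sanity check.

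For the governing equation, the cleanest route is to subordinate the PDE \eqref{difeq1} rather than differentiate the explicit distribution. Starting from $\frac{\partial}{\partial s}p(n,s,t) = -(\lambda_1+\lambda_2)t\,p(n,s,t) + \lambda_1 t\,p(n-1,s,t) + \lambda_2 t\,p(n+1,s,t)$, I would take the $s$-Laplace transform of the representation $p^\alpha(n,s,t) = \int_0^\infty p(n,x,t)\,\mathrm{Pr}\{E^\alpha(s)\in\mathrm{d}x\}$, apply \eqref{islap} to get $\widetilde{p^\alpha}(n,w,t) = w^{\alpha-1}\widetilde{p}(n,w^\alpha,t)$ where tildes denote $s$-Laplace transforms, then Laplace-transform \eqref{difeq1} in its space variable (using the initial value $p(n,0,t) = \delta_{n,0}$) to obtain an algebraic-in-$w^\alpha$ relation, substitute, and invert using \eqref{frderlap} with the initial condition $p^\alpha(n,0,t) = \delta_{n,0}$. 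This produces exactly the Caputo-$\partial^\alpha/\partial s^\alpha$ system claimed, with $p^\alpha(0,0,t)=1$.

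The main obstacle I anticipate is the exponent bookkeeping in the Mittag-Leffler inversion: matching $w^{\alpha-1}(w^\alpha + (\lambda_1+\lambda_2)t)^{-(|n|+2k+1)}$ against the transform pair \eqref{tmllap} requires carefully choosing $\beta$ so that $w^{\alpha\gamma-\beta}/(w^\alpha-c)^\gamma$ reproduces the $w^{\alpha-1}$ factor once the $s^{\alpha(|n|+2k)}$ prefactor that appears from scaling $x\mapsto s^\alpha\cdot$ is accounted for, and one must check the convergence condition $|cw^{-\alpha}|<1$ holds on a right half-plane so the inversion is legitimate. A secondary subtlety is justifying the interchange of the infinite $k$-sum with the Laplace transform and with the inverse Laplace transform; since every summand is a nonnegative measurable function of $s$ (for fixed $t$ and $n$) and the series converges to an integrable function, Tonelli/Fubini handles the forward transform, and term-by-term inversion is valid because the resulting series of Mittag-Leffler terms converges locally uniformly. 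The PDE derivation is then essentially routine once the Laplace-domain identity $\widetilde{p^\alpha}(n,w,t) = w^{\alpha-1}\widetilde p(n,w^\alpha,t)$ is in hand, exactly paralleling the pgf computation already carried out for FSRF-I.
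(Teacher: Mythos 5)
Your proposal follows essentially the same route as the paper: subordinate the SRF distribution (\ref{srfdist}) through $E^\alpha(s)$, take the Laplace transform in $s$ via (\ref{islap}), and invert term by term with (\ref{tmllap}); for the governing system, transfer (\ref{difeq1}) through the Laplace-domain identity $\int_0^\infty e^{-ws}p^\alpha(n,s,t)\,\mathrm{d}s=w^{\alpha-1}\int_0^\infty p(n,x,t)e^{-xw^\alpha}\,\mathrm{d}x$ and invert with (\ref{frderlap}), exactly as the paper does. One correction to the bookkeeping you flag as the main obstacle: matching $w^{\alpha-1}(w^\alpha+(\lambda_1+\lambda_2)t)^{-(|n|+2k+1)}$ against (\ref{tmllap}) with $\gamma=|n|+2k+1$ forces $\beta=\alpha\gamma-(\alpha-1)=\alpha(|n|+2k)+1$, not $\alpha(|n|+2k)$; the stray factor of $w^{-1}$ you notice cannot be absorbed by the $s^{\alpha(|n|+2k)}$ prefactor, since that prefactor is precisely the $s^{\beta-1}$ already accounted for in the transform pair. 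The resulting function is $E^{|n|+2k+1}_{\alpha,\alpha(|n|+2k)+1}$, and the sanity check confirms it: $E^{\beta}_{1,\beta}(-x)=e^{-x}/\Gamma(\beta)$ recovers (\ref{srfdist}) at $\alpha=1$, whereas the second index $\alpha(|n|+2k)$ printed in the theorem gives $(|n|+2k-x)e^{-x}$ instead of $e^{-x}$ --- so the discrepancy you were trying to reconcile is an off-by-one typo in the statement, not something your inversion needs to reproduce. With that $+1$ carried through, your argument is the paper's argument.
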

\begin{proof}
	From (\ref{fsrf2}), the distribution of FSRF-II is given by
	\begin{align}
		p^{\alpha}(n,s,t)&=\int_{0}^{\infty}p(n,x,t)\mathrm{Pr}\{E^{\alpha}(s)\in\mathrm{d}x\}\label{de2pf1}\\
		&=\bigg(\frac{\lambda_1}{\lambda_2}\bigg)^{n/2}\sum_{k=0}^{\infty}\frac{(\sqrt{\lambda_1\lambda_2}t)^{|n|+2k}}{(|n|+k)!k!}\int_{0}^{\infty}e^{-(\lambda_1+\lambda_2)xt}x^{|n|+2k}\mathrm{Pr}\{E^{\alpha}(s)\in\mathrm{d}x\},\nonumber
	\end{align}
	where we have used (\ref{srfdist}).
	Its Laplace transform with respect $s$ is
	\begin{align*}
		\int_{0}^{\infty}e^{-ws}p^{\alpha}(n,s,t)\,\mathrm{d}s
		&=\bigg(\frac{\lambda_1}{\lambda_2}\bigg)^{n/2}\sum_{k=0}^{\infty}\frac{(\sqrt{\lambda_1\lambda_2}t)^{|n|+2k}}{(|n|+k)!k!}w^{\alpha-1}\int_{0}^{\infty}e^{-(\lambda_1+\lambda_2)xt}x^{|n|+2k}e^{-xw^\alpha}\,\mathrm{d}x\\
		&=\bigg(\frac{\lambda_1}{\lambda_2}\bigg)^{n/2}\sum_{k=0}^{\infty}\frac{(\sqrt{\lambda_1\lambda_2}t)^{|n|+2k}}{(|n|+k)!k!}\frac{(|n|+2k)!w^{\alpha-1}}{(w^\alpha+(\lambda_1+\lambda_2)t)^{|n|+2k+1}}.
	\end{align*}
	By using (\ref{tmllap}), its inversion yields the required distribution of FSRF-II.
	
	The Laplace transform of (\ref{de2pf1}) is given by
	\begin{equation}\label{de2pf2}
		\int_{0}^{\infty}e^{-ws}p^{\alpha}(n,s,t)\,\mathrm{d}s=w^{\alpha-1}\int_{0}^{\infty}p(n,x,t)e^{-xw^\alpha}\,\mathrm{d}x,\ w>0.
	\end{equation}
	Now, on taking the Laplace transform on both sides of (\ref{difeq1}) with respect to $s$, we have
	\begin{align*}
		w\int_{0}^{\infty}e^{-ws}&p(n,s,t)\,\mathrm{d}s-p(n,0,t)\\
		&=\int_{0}^{\infty}e^{-ws}(-(\lambda_1+\lambda_2)tp(n,s,t)+\lambda_1 tp(n-1,s,t)+\lambda_2 tp(n+1,s,t))\,\mathrm{d}s,\ w>0.
	\end{align*}
	So,
	\begin{align}
		w^\alpha w^{\alpha-1}&\int_{0}^{\infty}e^{-w^\alpha s}p(n,s,t)\,\mathrm{d}s-w^{\alpha-1}p(n,0,t)\nonumber\\
		&=w^{\alpha-1}\int_{0}^{\infty}e^{-w^\alpha s}(-(\lambda_1+\lambda_2)tp(n,s,t)+\lambda_1 tp(n-1,s,t)+\lambda_2 tp(n+1,s,t))\,\mathrm{d}s.\label{de2pf3}
	\end{align}
	By using (\ref{de2pf2}) in (\ref{de2pf3}), and $p^\alpha(n,0,t)=p(n,0,t)$, we get
	\begin{align}
		w^\alpha \int_{0}^{\infty}&e^{-w s}p^\alpha(n,s,t)\,\mathrm{d}s-w^{\alpha-1}p^\alpha(n,0,t)\nonumber\\
		&=\int_{0}^{\infty}e^{-w s}(-(\lambda_1+\lambda_2)tp^\alpha(n,s,t)+\lambda_1 tp^\alpha(n-1,s,t)+\lambda_2 tp^\alpha(n+1,s,t))\,\mathrm{d}s.\label{de2pf4}
	\end{align}
	On using (\ref{frderlap}), the inversion of (\ref{de2pf4}) yields the required governing equations. This completes the proof.
\end{proof}
\begin{remark}
	The mean and variance of FSRF-II are given by
	\begin{equation*}
		\mathbb{E}S^\alpha(s,t)=\frac{(\lambda_1-\lambda_2)s^\alpha t}{\Gamma(\alpha+1)}
	\end{equation*}
	and
	\begin{equation*}
		\mathbb{V}\mathrm{ar}S^\alpha(s,t)=\frac{(\lambda_1+\lambda_2)s^\alpha t}{\Gamma(\alpha+1)}+(\lambda_1-\lambda_2)^2s^{2\alpha}t^{2}\bigg(\frac{2}{\Gamma(2\alpha+1)}-\frac{1}{\Gamma^2(\alpha+1)}\bigg),\ (s,t)\in\mathbb{R}^2_+,
	\end{equation*}
	respectively.
	These can be derived by using the first and second moments of SRF and definition (\ref{fsrf2}).
\end{remark}
\begin{remark}
	From (\ref{fsrf2}), the pgf of FSRF-II is given by
	\begin{equation*}
		\mathbb{E}u^{S^\alpha(s,t)}=\int_{0}^{\infty}G(u,x,t)\mathrm{Pr}\{E^\alpha(s)\in\mathrm{d}x\},
	\end{equation*}
	where $G(u,s,t)$ is the pgf of SRF. Its Laplace transform with respect to variable $s$ is given by
	\begin{align*}
		\int_{0}^{\infty}e^{-ws}\mathbb{E}u^{S^\alpha(s,t)}\,\mathrm{d}s&=w^{\alpha-1}\int_{0}^{\infty}\exp\bigg(\lambda_1xt(u-1)+\lambda_2xt\bigg(\frac{1}{u}-1\bigg)\bigg)e^{-xw^\alpha}\,\mathrm{d}x\\
		&=\frac{w^{\alpha-1}}{w^\alpha+\bigg(\lambda_1t(u-1)+\lambda_2t\bigg(\frac{1}{u}-1\bigg)\bigg)},\ w>0,
	\end{align*}
	whose inversion yields
	\begin{equation*}
		\mathbb{E}u^{S^\alpha(s,t)}=E_{\alpha,1}\bigg(\lambda_1s^\alpha t(u-1)+\lambda_2s^\alpha t\bigg(\frac{1}{u}-1\bigg)\bigg),\ 0<u\leq1.
	\end{equation*}
\end{remark}

\subsection{Fractional Skellam random field-III} We define the third fractional variant of the SRF as a difference of two independent fractional Poisson random fields on $\mathbb{R}^2_+$ (for definition, see Section \ref{fprf}). Let $\{N_1^{\alpha,\beta}(s,t),\ (s,t)\in\mathbb{R}^2_+\}$, $0<\alpha,\beta\leq1$ and  $\{N_2^{\alpha',\beta'}(s,t),\ (s,t)\in\mathbb{R}^2_+\}$, $0<\alpha',\beta'\leq1$ be two independent fractional Poisson random fields with parameter $\lambda_1>0$ and $\lambda_2>0$, respectively. Let us consider the random field $\{S_{\alpha',\beta'}^{\alpha,\beta}(s,t),\ (s,t)\in\mathbb{R}^2_+\}$ defined as follows:
 \begin{equation}
 	S_{\alpha',\beta'}^{\alpha,\beta}(s,t)\coloneqq N_1^{\alpha,\beta}(s,t)-N_2^{\alpha',\beta'}(s,t).
 \end{equation}
 We call it the fractional Skellam random field of third type (FSRF-III). For $n\ge0$, by using (\ref{fprfdist}), its distribution is given by
 {\footnotesize\begin{align*}
 	\mathrm{Pr}&\{S_{\alpha',\beta'}^{\alpha,\beta}(s,t)=n\}\\
 	&=\sum_{k=0}^{\infty}\mathrm{Pr}\{N_1^{\alpha,\beta}(s,t)=n+k\}\mathrm{Pr}\{N_2^{\alpha',\beta'}(s,t)=k\}\\
 	&=\sum_{k=0}^{\infty}\sum_{r=n+k}^{\infty}\frac{(-1)^{r-n-k}r_{(r-n-k)}r_{(n+k)}(\lambda_1 s^{\alpha}t^\beta)^r}{\Gamma(r\alpha+1)\Gamma(r\beta+1)}\sum_{l=k}^{\infty}\frac{(-1)^{l-k}l_{(l-k)}l_{(k)}(\lambda_2 s^{\alpha'}t^{\beta'})^l}{\Gamma(l\alpha'+1)\Gamma(l\beta'+1)}\\
 	&=\sum_{k=0}^{\infty}\sum_{r=0}^{\infty}\frac{(-1)^{r}(r+n+k)_{(r)}(r+n+k)_{(n+k)}(\lambda_1 s^{\alpha}t^\beta)^{r+n+k}}{\Gamma((r+n+k)\alpha+1)\Gamma((r+n+k)\beta+1)}\sum_{l=0}^{\infty}\frac{(-1)^{l}(l+k)_{(l)}(l+k)_{(k)}(\lambda_2 s^{\alpha'}t^{\beta'})^{l+k}}{\Gamma((l+k)\alpha'+1)\Gamma((l+k)\beta'+1)}\\
 	&=\sum_{r=0}^{\infty}\sum_{l=0}^{\infty}\frac{(-1)^{l+r}(\lambda_1s^\alpha t^\beta)^{r+n}(\lambda_2s^{\alpha'}t^{\beta'})^l}{r!l!}\sum_{k=0}^{\infty}\frac{\Gamma(r+n+1+k)\Gamma(r+n+1+k)\Gamma(l+1+k)}{\Gamma(n+1+k)\Gamma(1+k)\Gamma((r+n)\alpha+1+k\alpha)}\\
 	&\hspace{6.5cm}\cdot\frac{\Gamma(l+1+k)(\lambda_1\lambda_2s^{\alpha+\alpha'} t^{\beta+\beta'})^k}{\Gamma((r+n)\beta+1+k\beta)\Gamma(l\alpha'+1+k\alpha')\Gamma(l\beta'+1+k\beta')}\\
 	&=\sum_{r=0}^{\infty}\sum_{l=0}^{\infty}\frac{(-1)^{l+r}(\lambda_1s^\alpha t^\beta)^{r+n}(\lambda_2s^{\alpha'}t^{\beta'})^l}{r!l!}\\
 	&\ \ \cdot{}_4\Psi_5\left[\begin{matrix}
 		(r+n+1,1)&(r+n+1,1)&(l+1,1)&(l+1,1)\\\\
 		(n+1,1)&((r+n)\alpha+1,\alpha)&((r+n)\beta+1,\beta)&(l\alpha'+1,\alpha')&(l\beta'+1,\beta')
 	\end{matrix}\bigg|\lambda_1\lambda_2s^{\alpha+\alpha'}t^{\beta+\beta'}\right],
 \end{align*}}
where ${}_4\Psi_5$ is the generalized Wright function defined in (\ref{genwrit}). Similarly, for $n<0$, we have
{\footnotesize\begin{align*}
	\mathrm{Pr}&\{S_{\alpha',\beta'}^{\alpha,\beta}(s,t)=n\}\\
	&=\sum_{k=|n|}^{\infty}\mathrm{Pr}\{N_1^{\alpha,\beta}(s,t)=n+k\}\mathrm{Pr}\{N_2^{\alpha',\beta'}(s,t)=k\}\\
	&=\sum_{k=0}^{\infty}\mathrm{Pr}\{N_1^{\alpha,\beta}(s,t)=k\}\mathrm{Pr}\{N_2^{\alpha',\beta'}(s,t)=k+|n|\}\\
	&=\sum_{r=0}^{\infty}\sum_{l=0}^{\infty}\frac{(-1)^{l+r}(\lambda_2s^{\alpha'} t^{\beta'})^{r+|n|}(\lambda_1s^{\alpha}t^{\beta})^l}{r!l!}\\
	&\ \ \cdot{}_4\Psi_5\left[\begin{matrix}
		(r+|n|+1,1)&(r+|n|+1,1)&(l+1,1)&(l+1,1)\\\\
		(|n|+1,1)&((r+|n|)\alpha'+1,\alpha')&((r+|n|)\beta'+1,\beta')&(l\alpha+1,\alpha)&(l\beta+1,\beta)
	\end{matrix}\bigg|\lambda_1\lambda_2s^{\alpha+\alpha'}t^{\beta+\beta'}\right],
\end{align*}}
which can be obtained following the steps of case $n\ge0$.
\begin{remark}
	For $\lambda_1=\lambda_2=\lambda$, $\alpha=\alpha'$ and $\beta=\beta'$, we have symmetry in the expression of distribution of FSRF-III. Thus, it this case, it is given by
	{\footnotesize\begin{align*}
	\mathrm{Pr}&\{S_{\alpha',\beta'}^{\alpha,\beta}(s,t)=n\}\\
	&=\sum_{r=0}^{\infty}\sum_{l=0}^{\infty}\frac{(-1)^{l+r}(\lambda s^{\alpha} t^{\beta})^{r+|n|+l}}{r!l!}\\
	&\ \ \cdot{}_4\Psi_5\left[\begin{matrix}
		(r+|n|+1,1)&(r+|n|+1,1)&(l+1,1)&(l+1,1)\\\\
		(|n|+1,1)&((r+|n|)\alpha+1,\alpha)&((r+|n|)\beta+1,\beta)&(l\alpha+1,\alpha)&(l\beta+1,\beta)
	\end{matrix}\bigg|(\lambda s^{\alpha}t^{\beta})^2\right],\ n\in\mathbb{Z}.
	\end{align*}}
\end{remark}
\begin{remark}
	For $\alpha=\alpha'=\beta=\beta'=1$, the FSRF-III reduces to the SRF and we have
	\begin{align*}
		\mathrm{Pr}\{S_{1,1}^{1,1}(s,t)=n\}&=\sum_{r=0}^{\infty}\sum_{l=0}^{\infty}\frac{(-1)^{l+r}(\lambda_1s t)^{r+n}(\lambda_2st)^l}{r!l!}\sum_{k=0}^{\infty}\frac{(\lambda_1\lambda_2s^2t^2)^k}{(n+k)!k!}\\
		&=e^{-(\lambda_1+\lambda_2)st}\bigg(\frac{\lambda_1}{\lambda_2}\bigg)^{n/2}I_n(2\sqrt{\lambda_1\lambda_2}st),\ n\ge0.
	\end{align*}
	Also, for $n<0$, we have
	\begin{align*}
		\mathrm{Pr}\{S_{1,1}^{1,1}(s,t)=n\}&=\sum_{r=0}^{\infty}\sum_{l=0}^{\infty}\frac{(-1)^{l+r}(\lambda_2st)^{r+|n|}(\lambda_1st)^l}{r!l!}\sum_{k=0}^{\infty}\frac{(\lambda_1\lambda_2s^2t^2)^k}{(|n|+k)!k!}\\
		&=e^{-(\lambda_1+\lambda_2)st}\bigg(\frac{\lambda_2}{\lambda_1}\bigg)^{n/2}I_{|n|}(2\sqrt{\lambda_1\lambda_2}st),
	\end{align*}
	which coincides with (\ref{srfdist}).
\end{remark}

For $(s,t)$ and $(s',t')$ in $\mathbb{R}^2_+$, by using (\ref{fprfmean}), (\ref{fprfvar}) and (\ref{fprfcov}), the mean, variance and auto covariance  of FSRF-III are given by
\begin{equation*}
	\mathbb{E}S_{\alpha',\beta'}^{\alpha,\beta}(s,t)=\frac{\lambda_1s^\alpha t^\beta}{\Gamma(\alpha+1)\Gamma(\beta+1)}-\frac{\lambda_2s^{\alpha'}t^{\beta'}}{\Gamma(\alpha'+1)\Gamma(\beta'+1)},
\end{equation*}
\begin{align*}
	\mathbb{V}\mathrm{ar}S_{\alpha',\beta'}^{\alpha,\beta}(s,t)&=\frac{\lambda_1 s^\alpha t^\beta}{\Gamma(\alpha+1)\Gamma(\beta+1)}+\frac{(2\lambda_1 s^\alpha t^\beta)^2}{\Gamma(2\alpha+1)\Gamma(2\beta+1)}-\frac{(\lambda_1 s^\alpha t^\beta)^2}{\Gamma^2(\alpha+1)\Gamma^2(\beta+1)}\\
	&\ \ +\frac{\lambda_2 s^{\alpha'} t^{\beta'}}{\Gamma(\alpha'+1)\Gamma(\beta'+1)}+\frac{(2\lambda_2 s^{\alpha'} t^{\beta'})^2}{\Gamma(2\alpha'+1)\Gamma(2\beta'+1)}-\frac{(\lambda_2 s^{\alpha'} t^{\beta'})^2}{\Gamma^2(\alpha'+1)\Gamma^2(\beta'+1)}
\end{align*}
and
\begin{align*}
	\mathbb{C}\mathrm{ov}(S_{\alpha',\beta'}^{\alpha,\beta}(s,t),S_{\alpha',\beta'}^{\alpha,\beta}(s',t'))&=\frac{\lambda_1(s\wedge s')^\alpha(t\wedge t')^\beta}{\Gamma(\alpha+1)\Gamma(\beta+1)}-\frac{\lambda_1^2(ss')^\alpha(tt')^\beta}{\Gamma^2(\alpha+1)\Gamma^2(\beta+1)}\nonumber\\
	&\ \ +\frac{\lambda_1}{\alpha\Gamma^2(\alpha)}\int_{0}^{s\wedge s'}((s-x)^\alpha+(s'-x)^\alpha)x^{\alpha-1}\,\mathrm{d}x\nonumber\\
	&\ \ \cdot\frac{\lambda_1}{\beta\Gamma^2(\beta)}\int_{0}^{t\wedge t'}((t-y)^\beta+(t'-y)^\beta)y^{\beta-1}\,\mathrm{d}y\\
	&\ \ +\frac{\lambda_2(s\wedge s')^{\alpha'}(t\wedge t')^{\beta'}}{\Gamma(\alpha'+1)\Gamma(\beta'+1)}-\frac{\lambda_2^2(ss')^{\alpha'}(tt')^{\beta'}}{\Gamma^2(\alpha'+1)\Gamma^2(\beta'+1)}\nonumber\\
	&\ \ +\frac{\lambda_2}{\alpha'\Gamma^2(\alpha')}\int_{0}^{s\wedge s'}((s-x)^{\alpha'}+(s'-x)^{\alpha'})x^{\alpha'-1}\,\mathrm{d}x\nonumber\\
	&\ \ \cdot\frac{\lambda_2}{\beta'\Gamma^2(\beta')}\int_{0}^{t\wedge t'}((t-y)^{\beta'}+(t'-y)^{\beta'})y^{\beta'-1}\,\mathrm{d}y,
\end{align*}
respectively.

\vspace*{5mm}
\paragraph{\textbf{Acknowledgment}} This work is partially supported by the National Post Doctoral Fellowship, PDF/2025/000076, from Anusandhan National Research Foundation, Govt. of India.

\end{document}